\documentclass[12pt]{amsart}
\usepackage[T1]{fontenc}
\usepackage{lmodern,amsmath,amssymb,amsthm,mathtools,microtype}
\usepackage[margin=1in]{geometry}
\usepackage{xcolor}
\usepackage{tikz}
\usepackage[colorlinks=true,linkcolor=blue,citecolor=blue,urlcolor=blue]{hyperref}
\hypersetup{pdftitle={Villani's conjecture for Kac's walk}}

\newtheorem{theorem}{Theorem}[section]
\newtheorem{lemma} {Lemma}[section]
\newtheorem{proposition}{Proposition}[section]
\theoremstyle{remark}
\newtheorem{remark}{Remark}[section]
\numberwithin{equation}{section}

\newcommand{\dd}{ d}
\newcommand{\avtheta}{\frac{d\theta}{2\pi}}
\newcommand{\ind}{\mathbf 1}

\title[Villani's conjecture for Kac's walk]
{Villani's conjecture for Kac's walk}

\author{Fuqing Gao}
\address{School of Mathematics and Statistics, Wuhan University, Wuhan 430072, China}
\email{fqgao@whu.edu.cn}
 \thanks{Supported by the National Natural Science Foundation of China (NSFC) Grant 12371275.}

\author{Shaochen Wang}
\address{School of Mathematics, South China University of Technology,\newline
Guangzhou 510640, China}
\email{mascwang@scut.edu.cn}
\thanks{Supported by NSFC Grant 11801181.}

\author{Xianjie Xia}
\address{School of Mathematics and Statistics, Wuhan University, Wuhan 430072, China}
\email{xianjiexia@whu.edu.cn}

\date{\today}

\subjclass[2020]{Primary 60J76; Secondary 60E15, 82C40}
\keywords{Kac's walk, Villani's conjecture, entropy production,
modified logarithmic Sobolev inequality, conditioned product measures}

\begin{document}

\begin{abstract}
We prove Villani's conjecture on entropy production for Kac's walk
with uniform collision angles. For every $N\geq2$, with total collision
rate $N$, the optimal entropy production constant is $2/(N-1)$.
We show that the known lower bound is sharp by constructing two
families of smooth strictly positive probability densities, invariant
under coordinate permutations and sign changes. The first consists
of normalized products of inverse powers; the second is obtained by
conditioning products of Gaussian mixtures on the energy sphere.
With $N$ fixed, we compute the leading terms of entropy and entropy
production. Successive parameter limits yield two independent proofs
of sharpness.
\end{abstract}

\maketitle

\section{Introduction}\label{sec-introduction}

Since the pioneering work of {Kac}
\cite{Kac1956}, {Kac's walk} has served as one of
the most influential and analytically tractable frameworks in kinetic theory.
{Kac's walk} is a continuous-time Markov jump process  $\{V_t,t\geq 0\}$ describing an
$N$-particle system with scalar velocities
$v=(v_1,\dots,v_N)$ constrained to the energy sphere
\begin{equation}\label{eq-N-sphere}
  \mathbb S_N=S^{N-1}(\sqrt{N})
  =\left\{v\in\mathbb R^N:\sum_{i=1}^N v_i^2=N\right\},
\end{equation}
in which binary collisions are modeled by random rotations.
For $1\le i<j\le N$ and $\theta\in[0,2\pi)$, let $R_{ij,\theta}$
be the rotation in the $(i,j)$-plane:
$$
 (v_i,v_j)\longmapsto
 (v_i\cos\theta-v_j\sin\theta,\ v_i\sin\theta+v_j\cos\theta),
$$
leaving all other coordinates unchanged.
Define the collision operators
\begin{equation}\label{eq-kac-collision-operators}
 Q_{ij}F(v)=\int_0^{2\pi}F(R_{ij,\theta}v) \frac{d\theta}{2\pi},
 \qquad
 Q_N=\binom{N}{2}^{-1}\sum_{1\le i<j\le N}Q_{ij},
\end{equation}
and the generator
\begin{equation}\label{eq-kac-generator}
 \mathcal L_N=N(Q_N-I)
 =\frac{2}{N-1}\sum_{1\le i<j\le N}(Q_{ij}-I).
\end{equation}
Thus each unordered pair collides at rate $2/(N-1)$, so that the total
collision rate is $N$. This is the normalization used in
\cite{CarlenCarvalhoRouxLossVillani2010,Einav2011} and in the
present paper.   Let  $\sigma_N$ denote the uniform probability measure on $\mathbb S_N=S^{N-1}(\sqrt N)$,  and define  the inner product in  $ L^2(\sigma_N)$  by
$$
 \langle f,g\rangle_{\sigma_N}
 =\int_{\mathbb S_N}f(v)g(v) \sigma_N(dv).
$$
Then  $Q_N$ is self adjoint on  $ L^2(\sigma_N)$, and the process  $\{V_t,t\geq 0\}$  is  reversible with respect to  $\sigma_N$.    If the distribution $\mu$ of  the initial state $V_0$
has a density $F^N$ with respect to $\sigma_N$, then  for any $t>0$, the distribution $\mu_t$ of  $V_t$
has a density $F_t^N$ which is the solution
of the Kac master equation
$$
 \partial_t F_t^N =\mathcal L_N F_t^N       \text{ with }  \lim_{t\to0}F_t^N=F^N.
$$
Let $(P_t^{(N)})_{t\ge0}$ be the Markov semigroup generated by
$\mathcal L_N$  and let $  \mathcal{D}_N$ {be} the Dirichlet form associated {with} $\mathcal L_N$.   Then
$$
F_t^N=P_t^{(N)}F^N,
$$
and the associated Dirichlet form is
\begin{equation}\label{eq-kac-Dirichlet-f}
\begin{aligned}
  \mathcal{D}_N(f,g)
 &=\langle -\mathcal L_N f,g\rangle_{\sigma_N}\\
 &=\frac{1}{N-1}\sum_{1\le i<j\le N}
   \int_0^{2\pi}
   \bigl\langle {f(R_{ij,\theta}\cdot)}-f,
                {g(R_{ij,\theta}\cdot)}-g\bigr\rangle_{\sigma_N}
    \frac{d\theta}{2\pi},
 \qquad f,g\in L^2(\sigma_N),
\end{aligned}
\end{equation}

Quantitative convergence to equilibrium has been a major theme in the
analysis of Kac's model. The spectral gap of $\mathcal L_N$ determines
exponential relaxation in $L^2(\sigma_N)$, {while entropy controls convergence in total variation
for initial densities of finite entropy.} The spectral gap $\Delta_N$ is defined by
the Rayleigh quotient
\begin{equation}\label{eq-kac-gap}
 \Delta_N
 :=\inf\left\{
   \frac{ \mathcal{D}_N(f,f)}{\operatorname{Var}_{\sigma_N}(f)}:
   f\in L^2(\sigma_N),\ \operatorname{Var}_{\sigma_N}(f)>0
 \right\}.
\end{equation}
{Here $\operatorname{Var}_{\sigma_N}(f)
=\int_{\mathbb S_N}(f-\langle f,1\rangle_{\sigma_N})^2\dd\sigma_N$.}
A positive $\Delta_N$ is equivalent to the Poincar\'e inequality
\begin{equation}\label{eq-PI}
 \operatorname{Var}_{\sigma_N}(f)\le \Delta_N^{-1} \mathcal{D}_N(f,f)  \mbox{ for any} ~ f\in L^2(\sigma_N),
\end{equation}
which implies exponential $L^2$-convergence:
$$
 \|P_t^{(N)}f-\langle f,1\rangle_{\sigma_N}\|_{L^2(\sigma_N)}
 \le e^{-\Delta_N t}
 \|f-\langle f,1\rangle_{\sigma_N}\|_{L^2(\sigma_N)}.
$$
Kac conjectured that $\Delta_N$ stays bounded away from zero as
$N\to\infty$. Janvresse \cite{Janvresse2001} proved that this
conjecture is true.  Carlen, Carvalho and Loss
\cite{CarlenCarvalhoLoss2000,CarlenCarvalhoLoss2003}
obtained the exact value of $\Delta_N$:
$$
\Delta_N=\frac{1}{2}\frac{{N+2}}{N-1}.
$$
{Maslen} \cite{Maslen2003}  derived   the exact value of $\Delta_N$   using a different approach.   Caputo \cite{Caputo2008} developed another approach to study the problem.

A more subtle  problem {than estimating the spectral gap} $\Delta_N$  is to estimate  the entropy production constant.
Let us first recall some notions.
For a probability density $F\ge0$   on   the sphere  $\mathbb S_N$ with $\int F d\sigma_N=1$, define
the entropy
\begin{equation}\label{eq-kac-entropy}
 H_N(F)=\int_{\mathbb S_N}F\log F d\sigma_N,
\end{equation}
with the convention $0\log0=0$.
The entropy production constant is defined by
\begin{equation}\label{eq-kac-entropy-constant}
 \Gamma_N
 =\inf\left\{
   \frac{\mathcal{D}_N(F,\log F)}{H_N(F)}:
   F\ge0,\ \int F d\sigma_N=1, \  0< H_N(F)<{\infty}
 \right\}{.}
\end{equation}
{For general probability densities, entropy production is defined,
with values in $[0,\infty]$, by}
\begin{equation}\label{eq-kac-entropy-production}
\begin{aligned}
 \mathcal{D}_N(F,\log F)
 & =\frac{1}{N-1}\sum_{1\le i<j\le N}
   \int_{\mathbb S_N}\int_0^{2\pi}
   \Psi\bigl(F(v),F(R_{ij,\theta}v)\bigr)
    \frac{d\theta}{2\pi} d\sigma_N(v),
\end{aligned}
\end{equation}
where
$$
 \Psi(x,y)=(x-y)(\log x-\log y),  ~x,y\in(0,\infty),
$$
extended by $\Psi(0,0)=0$ and
$\Psi(0,y)=\Psi(y,0)=+\infty$ for $y>0$.
{{\hypersetup{linkcolor=.,citecolor=.,urlcolor=.}For smooth strictly positive $F$, self-adjointness and
\eqref{eq-kac-generator} also give the one-sided representation}}
\begin{equation*}
 {\mathcal{D}_N(F,\log F)
 =\frac{2}{N-1}\sum_{i<j}\int_{\mathbb S_N}\int_0^{2\pi}
 F(v)\bigl[\log F(v)-\log F(R_{ij,\theta}v)\bigr]\,
 \frac{d\theta}{2\pi}\dd\sigma_N(v).}
\end{equation*}
Equivalently, a positive lower bound $\Gamma_N>0$ is the modified
logarithmic Sobolev inequality
\begin{equation}\label{eq-MLSI}
 \mathcal{D}_N(F,\log F)\ge \Gamma_N H_N(F),
 \qquad\text{or}\qquad
 H_N(F)\le \Gamma_N^{-1}\mathcal{D}_N(F,\log F),
\end{equation}
for all probability densities $F$ of finite entropy. Such an inequality
implies exponential entropy decay
$$
 H_N(P_t^{(N)}F)\le e^{-\Gamma_N t}H_N(F),
$$
{{\hypersetup{linkcolor=.,citecolor=.,urlcolor=.}and, by the Csisz\'ar--Kullback--Leibler--Pinsker inequality
\cite{BolleyVillani2005},}}
$$
\|(P_t^{(N)}F)\sigma_N-\sigma_N\|_{TV}^{{2}}
{\leq 2H_N(P_t^{(N)}F)}\leq 2 e^{-\Gamma_N t}H_N(F),
$$
where the subscript  TV  denotes the total variation norm{, with the convention
$\|\mu-\nu\|_{TV}=\sup_{|h|\leq1}|\int h\dd\mu-\int h\dd\nu|$
for bounded measurable $h$,} and   $f\sigma_N$ is the probability measure  with the density $f$ with respect to $\sigma_N$.
{In particular, $\|f\sigma_N-\sigma_N\|_{TV}
=\int_{\mathbb S_N}|f-1|\dd\sigma_N$.}

{{\hypersetup{linkcolor=.,citecolor=.,urlcolor=.}Villani's argument \cite[Theorem 6.1]{Villani2003}, specialized to
uniform collision angles, gives the lower bound below; see also
\cite[Section 1.4]{CarlenCarvalhoRouxLossVillani2010}:}}
\begin{equation}\label{eq-Villani-LB}
 \Gamma_N\ge \frac{2}{N-1}{.}
\end{equation}
{Villani also conjectured} that the order $N^{-1}$ is sharp, that is,
\begin{equation}\label{eq-Villani-conjecture}
 \Gamma_N=O\left(\frac1N\right).
\end{equation}
Carlen, Carvalho, Le Roux, Loss and Villani
\cite{CarlenCarvalhoRouxLossVillani2010} constructed a sequence of
probability densities $\{\phi_N\}_{N\in\mathbb N}$ with
\begin{equation}\label{eq-CCLLV}
 \limsup_{N\to\infty}
 \frac{\mathcal{D}_N(\phi_N,\log \phi_N)}{H_N(\phi_N)}=0.
\end{equation}
Einav \cite{Einav2011} refined this approach and proved that for every
$0<b<1/6$ there is a constant $C_b$, depending only on $b$, such that
\begin{equation}\label{eq-Einav}
 \Gamma_N^{\mathrm{sym}}
 \le \frac{C_b\log N}{N^{1-2b}},
\end{equation}
where $\Gamma_N^{\mathrm{sym}}$ denotes the same infimum restricted to
densities invariant under coordinate permutations. These estimates
approach the conjectured exponent but do not determine the optimal
constant at fixed $N$. Bounds under additional assumptions on the
densities were studied in \cite{CarlenCarvalhoEinav2018}; the survey
\cite{Einav2024} gives an overview of the entropy problem for Kac's model.
 Carlen et al.\
\cite{CarlenCarvalhoRouxLossVillani2010} connected the behavior in $N$ of $N$-particle entropy
and  chaos in {Kac's model}.

In this paper we prove Villani's conjecture in a sharp form. For every
fixed $N$, we determine the exact entropy production constant. The known
lower bound \eqref{eq-Villani-LB} is sharp, and the common infimum over all
probability densities is already approached by smooth strictly positive
densities invariant under coordinate permutations and sign changes. Our
main result is the following.

\begin{theorem}\label{thm-sharp-intro}
 For every $N\ge2$,
 $$
  \Gamma_N=\Gamma_N^{\mathrm{sym}}=\frac{2}{N-1}.
 $$

\end{theorem}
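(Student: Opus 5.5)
The lower bound $\Gamma_N\ge 2/(N-1)$ is Villani's estimate \eqref{eq-Villani-LB}, which we take as given. Since $\Gamma_N^{\mathrm{sym}}$ is an infimum over a smaller class than $\Gamma_N$, we have $\Gamma_N\le\Gamma_N^{\mathrm{sym}}$. It therefore suffices to show $\Gamma_N^{\mathrm{sym}}\le 2/(N-1)$. For this I would construct a family $F_{\varepsilon,a}$ of smooth, strictly positive, permutation- and sign-invariant probability densities on $\mathbb S_N$ such that
$$\frac{\mathcal D_N(F_{\varepsilon,a},\log F_{\varepsilon,a})}{H_N(F_{\varepsilon,a})}\to\frac{2}{N-1}$$
along a suitable iterated limit of the parameters. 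Following the abstract, the first family I would try is the normalized product of inverse powers
$$F_{\varepsilon,a}(v)=Z_{\varepsilon,a}^{-1}\prod_{i=1}^N (v_i^2+\varepsilon^2)^{-a}.$$
This density is smooth and positive for $\varepsilon>0$, and for small $\varepsilon$ and suitable $a$ it concentrates near the points of $\mathbb S_N$ where all but one coordinate vanish, i.e.\ near $\pm\sqrt N e_k$.

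First I compute $H_N(F)$ to leading order as $\varepsilon\to0$ with $N$ and $a$ fixed. Writing $\log F=-\log Z-a\sum_i\log(v_i^2+\varepsilon^2)$, the entropy is an explicit combination of $\log Z$ and the expectations $\int F\log(v_i^2+\varepsilon^2)\,d\sigma_N$. The asymptotics of $Z$ come from the local structure near the axes, namely $N-1$ cold coordinates each with one-dimensional profile $(x^2+\varepsilon^2)^{-a}$. The regime $a\in(1/2,1)$ (or $a\downarrow 1/2$ after $\varepsilon\to0$) should make $H_N\asymp (N-1)\,c(a)\log(1/\varepsilon)$, or a similar expression that diverges.

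Next I compute the entropy production through the one-sided representation
$$\mathcal D_N(F,\log F)=\frac{2}{N-1}\sum_{i<j}\int F\Bigl[\log F-\int_0^{2\pi}\log F(R_{ij,\theta}\cdot)\,\avtheta\Bigr]d\sigma_N .$$
For a pair $(i,j)$ only the factors $i,j$ change, so each term reduces to a two-dimensional computation:
$$-a\int F\Bigl[\log(v_i^2+\varepsilon^2)+\log(v_j^2+\varepsilon^2)-\int\bigl(\log((v_i')^2+\varepsilon^2)+\log((v_j')^2+\varepsilon^2)\bigr)\avtheta\Bigr]d\sigma_N,$$
with $r^2=v_i^2+v_j^2$ preserved by the rotation. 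The $\theta$-average is explicit: $\int\log(r^2\cos^2\theta+\varepsilon^2)\avtheta=2\log\frac{r+\sqrt{r^2+\varepsilon^2}}{2}$. Under $F$ there are two kinds of pairs.
\begin{itemize}
\item In a \emph{cold--cold} pair, $r$ is typically of order $\varepsilon$, and the bracket stays bounded.
\item In a \emph{hot--cold} pair, $r\approx\sqrt N$; the bracket is about $-2a\log\varepsilon$ from the cold coordinate minus an $O(1)$ average, so it contributes at order $\log(1/\varepsilon)$.
\end{itemize}
Summing over the $N-1$ hot--cold pairs (weighted by the mass near each axis), the leading term of $\mathcal D_N$ should be $\frac{2}{N-1}(N-1)\cdot 2a\log(1/\varepsilon)\cdot(\text{mass fraction})$. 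Comparing with $H_N$ gives a limiting ratio $\frac{2}{N-1}\,\kappa(a)$, where $\kappa(a)\to1$ in a second limit in $a$. If the product family leaves a gap, I would switch to the second family: products of Gaussian mixtures conditioned on $\mathbb S_N$, with the mixing variance playing the role of $\varepsilon$. There the same two-scale computation should go through using the explicit Gaussian conditioning.

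The main obstacle is the sharp leading-order asymptotics. To get the ratio exactly $2/(N-1)$, and not merely of order $1/N$, both $H_N$ and $\mathcal D_N$ must be expanded to their leading $\log(1/\varepsilon)$ coefficients. This includes the exact contribution of mass \emph{not} near the axes, such as configurations with two or more moderately hot coordinates, and the cold--cold pairs. I would handle this by splitting $\mathbb S_N$ according to how many coordinates exceed $\varepsilon^{\delta}$, using dominated convergence on each piece after rescaling the cold coordinates by $\varepsilon$, and showing that every piece other than the one-hot-coordinate region is $o(\log(1/\varepsilon))$ for both functionals. Only after this would I take the limit in $a$. The order of limits ($\varepsilon\to0$ first, then $a$) is essential and must be respected throughout.
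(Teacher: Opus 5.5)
Your plan is the paper's first proof. Your $\varepsilon^2$ and exponent $a$ are the paper's $a$ and $q$, and the order of limits is the same: $\varepsilon\to0$ first, then $a\downarrow1/2$. Your entropy-production count is right: $2a\log(1/\varepsilon)$ per hot--cold pair, hence $\mathcal D_N\sim4a\log(1/\varepsilon)$.

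\textbf{The gap.} As written the proposal does not yet prove sharpness. You leave the entropy coefficient $c(a)$ undetermined ("or a similar expression that diverges") and then simply assert $\kappa(a)\to1$. That coefficient is exactly where the constant $2/(N-1)$ comes from.

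\textbf{How to close it.} It follows from what you already have.
Each cold coordinate contributes $\int_{\mathbb R}(x^2+\varepsilon^2)^{-a}\,dx=\varepsilon^{1-2a}\int_{\mathbb R}(1+y^2)^{-a}\,dy$. Hence $\log Z=(N-1)(2a-1)\log(1/\varepsilon)+O(1)$.
Near an axis, for fixed rescaled cold coordinates, $-a\sum_i\log(v_i^2+\varepsilon^2)=2a(N-1)\log(1/\varepsilon)+O(1)$.
The $a$-dependence cancels exactly, so $\log F=(N-1)\log(1/\varepsilon)+O(1)$ on the bulk. Thus $c(a)=1$ and the ratio tends to $4a/(N-1)$. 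So $\kappa(a)=2a$, which tends to $1$ precisely as $a$ decreases to the integrability threshold $1/2$. No fallback to the Gaussian mixtures (the paper's independent second proof) is needed.

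\textbf{A simpler region analysis.} The $\varepsilon^{\delta}$ splitting is unnecessary.
\begin{itemize}
\item The paper partitions $\mathbb S_N$ into the $2N$ patches on which a fixed $v_k$ has the largest modulus and a fixed sign. There $v_k^2\ge1$.
\item After rescaling the remaining coordinates by $\varepsilon$, the conditional density is bounded by a constant multiple of $\prod_j(1+y_j^2)^{-a}\in L^1(\mathbb R^{N-1})$, uniformly for small $\varepsilon$.
\item There is a global bound $\sup_{\mathbb S_N}|\log F|\le C\log(1/\varepsilon)$. It follows from $\varepsilon^2\le v_i^2+\varepsilon^2\le N+1$ and the size of $\log Z$.
\item With these, one dominated-convergence argument per patch, jointly in $y$ and $\theta$, handles the entropy, the hot--cold and cold--cold pairs, and configurations with a second moderately hot coordinate. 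The latter are just large $|y|$, which carry vanishing mass.
\end{itemize}

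\textbf{A small slip.} The correct average is $\int_0^{2\pi}\log(r^2\cos^2\theta+\varepsilon^2)\,\frac{d\theta}{2\pi}=2\log\frac{\varepsilon+\sqrt{r^2+\varepsilon^2}}{2}$. As a check, at $\varepsilon=0$ it must equal $2\log(r/2)$. Your formula has $r$ in place of $\varepsilon$ in the first slot; this is harmless at leading order.
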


\begin{remark}
{{\hypersetup{linkcolor=.,citecolor=.,urlcolor=.}The lower bound $\Gamma_N\geq2/(N-1)$ follows from
\eqref{eq-Villani-LB}. Since $\Gamma_N\leq\Gamma_N^{\mathrm{sym}}$,
it remains to prove the sharp upper bound
$\Gamma_N^{\mathrm{sym}}\leq2/(N-1)$.}}  In this paper, we give two
constructions of approximating densities to show the sharp upper bound.
\end{remark}

{We keep $N\geq2$ fixed in both constructions.
The first uses normalized products of inverse powers $(a+v_i^2)^{-q}$.
For $0<a<1$ and $q>1/2$, define}
\begin{equation}\label{eq-inverse-trials}
 G_{a,q}(v)=\prod_{i=1}^N(a+v_i^2)^{-q},\qquad
 Z_{a,q}=\int_{\mathbb S_N}G_{a,q}\dd\sigma_N,\qquad F_{a,q}=\frac{G_{a,q}}{Z_{a,q}}.
\end{equation}
The product function  $ F_{a,q}$   preserves permutation
and sign symmetry.     We first take $a\downarrow0$ with $q>1/2$, so that the scaled
function $(1+y^2)^{-q}$ is integrable, and then let $q\downarrow1/2$.  The final limit $q\downarrow1/2$ is suggested by the following
Proposition~\ref{prop-inverse-asymptotics}.

\begin{proposition}\label{prop-inverse-asymptotics}
For each fixed $N\geq2$ and $q>1/2$,
\begin{equation}\label{eq-inverse-asymptotics}
 \lim_{a\downarrow0}\frac{H_N(F_{a,q})}{\ell_{a}}=\frac{N-1}{2},
 \qquad
 \lim_{a\downarrow0}\frac{\mathcal{D}_N(F_{a,q},\log F_{a,q})}{\ell_{a}}=2q,
\end{equation}
where $\ell_a=\log(1/a)$.
\end{proposition}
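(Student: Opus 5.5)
The plan is to identify the asymptotic shape of $F_{a,q}$ as $a\downarrow0$ and then evaluate $H_N$ and the one-sided entropy production by dominated convergence. For $q>1/2$ each factor $(a+v_i^2)^{-q}$ carries mass of order $a^{1/2-q}\to\infty$ in the window $|v_i|\lesssim\sqrt a$. On $\mathbb S_N$ at most $N-1$ coordinates can be small. Hence the mass of $F_{a,q}$ should concentrate in $2N$ caps around the poles $\pm\sqrt N e_k$. In these caps we write $v_i=\sqrt a\,y_i$ for $i\ne k$ and $v_k\approx\pm\sqrt N$.

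\textbf{Step 1: normalization.} Near a pole, $\sigma_N$ has a smooth positive density in the $N-1$ local coordinates $(v_i)_{i\neq k}$. Rescaling then gives
\[
Z_{a,q}=(1+o(1))\,2N\,c_N\,N^{-q}\,a^{(N-1)(1/2-q)}\Bigl(\int_{\mathbb R}(1+y^2)^{-q}dy\Bigr)^{N-1}.
\]
The complement of the caps must be shown negligible. Split it into the region where at least two coordinates satisfy $|v_i|\ge\delta$; there $G_{a,q}\le C_\delta\prod_{\text{remaining }N-2}(a+v_i^2)^{-q}$, whose integral is $O(a^{(N-2)(1/2-q)})$, i.e.\ a factor $a^{q-1/2}\to0$ relative to $Z_{a,q}$. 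The transition region is then absorbed by letting $\delta\downarrow0$ after $a\downarrow0$.

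\textbf{Step 2: entropy.} We write $H_N(F_{a,q})=\mathbb E_{F}[\log G_{a,q}]-\log Z_{a,q}$. Near pole $k$,
\[
\log G_{a,q}=q(N-1)\ell_a-q\sum_{i\ne k}\log(1+y_i^2)+O(1),
\]
and $\int\log(1+y^2)(1+y^2)^{-q}dy<\infty$ for $q>1/2$. The same tail estimates (with an extra factor $\log(1/a)$, still killed by $a^{q-1/2}$) give $\mathbb E_F[\log G_{a,q}]=q(N-1)\ell_a+O(1)$. Step 1 gives $\log Z_{a,q}=(N-1)(q-1/2)\ell_a+O(1)$. Subtracting yields $H_N=\frac{N-1}{2}\ell_a+O(1)$.

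\textbf{Step 3: entropy production.} I would use the one-sided representation
\[
\mathcal D_N(F,\log F)=\frac{2}{N-1}\sum_{i<j}\mathbb E_F\Bigl[\log G(v)-\int_0^{2\pi}\log G(R_{ij,\theta}v)\frac{d\theta}{2\pi}\Bigr],
\]
in which $Z$ cancels, and classify pairs relative to the pole $k$.
\begin{itemize}
\item For $i,j\ne k$, the rotation preserves $v_i^2+v_j^2=a(y_i^2+y_j^2)$. Both before and after, the $(i,j)$ factors equal $2q\ell_a-q\log[(1+y_i^2)(1+y_j^2)]$ up to bounded terms, so the difference is $O(1)$ in $L^1(F)$.
\item For the pair $(k,j)$, before the rotation $\log$ of the $j$-factor is $q\ell_a+O(\log(1+y_j^2))$ and of the $k$-factor is $O(1)$. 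After a rotation by $\theta$, both new coordinates are $\approx\sqrt N\cos\theta,\sqrt N\sin\theta$ up to $O(\sqrt a|y_j|)$. Hence the averaged log of the two new factors is $-q\int[\log(a+N\cos^2\theta)+\log(a+N\sin^2\theta)]\frac{d\theta}{2\pi}+o(1)$, which is bounded uniformly in $a$ since $\log|\sin\theta|$ is integrable.
\end{itemize}
Each of the $N-1$ pairs through $k$ therefore contributes $q\ell_a+O(1)$, so $\mathcal D_N=\frac{2}{N-1}(N-1)q\ell_a+O(1)=2q\ell_a+O(1)$. Dividing Steps 2 and 3 by $\ell_a$ gives the statement.

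\textbf{Main obstacle.} The delicate point is Step 3 off the caps and in the transition zone. There $\log G(R_{ij,\theta}v)$ can be as large as $N q\ell_a$, and we need uniform-in-$a$ integrable majorants. I would first bound
\[
\bigl|\log G(v)\bigr|+\int_0^{2\pi}\bigl|\log G(R_{ij,\theta}v)\bigr|\frac{d\theta}{2\pi}\le C\bigl(\ell_a+\textstyle\sum_i\log(1+v_i^2/a)\bigr).
\]
This reduces everything to the Step 1 tail estimates weighted by $\ell_a$ and $\log(1+y^2)$. Those are $o(\ell_a)$ because the non-cap $F$-mass is $O(a^{q-1/2}\ell_a)$ and the cap integrands are dominated by $(1+|y|)^{\epsilon}(1+y^2)^{-q}$.
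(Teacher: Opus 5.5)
Your proposal is correct, and its skeleton is the paper's. You localize at the $2N$ poles $\pm\sqrt N e_k$ and rescale the small coordinates as $\sqrt a\,y$. You use the one-sided production formula, so that $Z_{a,q}$ and all factors outside the colliding pair cancel. Then you check that only the $N-1$ pairs through the large coordinate lose $q\ell_a$.

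What differs is how the localization is controlled:
\begin{itemize}
\item The paper partitions the sphere into the patches $U_{k,s}$ on which $v_k$ is the largest coordinate in absolute value. There $v_k^2\ge1$, so the rescaled integrand is dominated by $w_q(y)=\prod_j(1+y_j^2)^{-q}$ on the entire patch. Normalization, entropy and production then all follow from pointwise limits, the uniform bound $|\log F_{a,q}|\le C_{N,q}\ell_a$, and dominated convergence. No tail estimate and no auxiliary cutoff are needed.
\item You instead use $\delta$-caps, plus an explicit bound showing that the region with two coordinates of size at least $\delta$ has $F_{a,q}$-mass $O(a^{q-1/2})$. You then track remainders explicitly through the integrability of $\log(1+y^2)(1+y^2)^{-q}$ and of $\log|\cos\theta|$.
\end{itemize}
Your route costs an extra estimate. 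In exchange it gives the quantitative forms $H_N(F_{a,q})=\frac{N-1}{2}\ell_a+O(1)$ and $\mathcal D_N(F_{a,q},\log F_{a,q})=2q\ell_a+O(1)$, hence a ratio $4q/(N-1)+O(1/\ell_a)$. The paper's dominated-convergence argument does not provide these rates.

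Three minor points:
\begin{itemize}
\item In the graph coordinates, $d\sigma_N=c_N\,du/\sqrt{N-|u|^2}$, so the density at a pole is $c_NN^{-1/2}$, not $c_N$. Your constant should therefore be $2c_NN^{1/2-q}I_q^{N-1}$ rather than $2Nc_NN^{-q}I_q^{N-1}$. This is harmless, since only $\log Z_{a,q}=(N-1)(q-\tfrac12)\ell_a+O(1)$ is used.
\item For the same reason, a fixed $\delta<1$ already suffices. The limit $\delta\downarrow0$ is needed only to pin down the exact constant, and there is no separate transition region to absorb.
\item For the pair $(k,j)$, write $r^2=v_k^2+v_j^2$, which is at least $1$ on your caps. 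By translation invariance of $d\theta$, the $\theta$-average of the post-collision logarithms is exactly $-q\int_0^{2\pi}[\log(a+r^2\cos^2\theta)+\log(a+r^2\sin^2\theta)]\frac{d\theta}{2\pi}$. So your perturbative ``$\approx\sqrt N\cos\theta$'' step can be dropped, and boundedness is immediate.
\end{itemize}
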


The second construction uses conditioned Gaussian mixtures, as in
\cite{CarlenCarvalhoRouxLossVillani2010,Einav2011}, but takes the parameter
limits at fixed $N$. A product of centered Gaussian densities with a
common variance is constant on $\mathbb S_N$: its exponent depends
only on the total energy. A mixture of two variances instead allows
small and large coordinates to coexist on the {sphere.}
For $s>0$, let $M_s$ be the centered Gaussian density of variance $s$,
$$
 M_s(x)=\frac1{\sqrt{2\pi s}}e^{-x^2/(2s)}.
$$
For $0<a<1$ and $0<\delta<1$, define
\begin{equation}\label{eq-gaussian-trials}
 f_{a,\delta}=(1-\delta)M_a+\delta M_1,\qquad
 F_{a,\delta}(v) =\frac{\prod_{i=1}^Nf_{a,\delta}(v_i)}{Z_{a,\delta}},
\end{equation}
where
$$
 Z_{a,\delta}=\int_{\mathbb S_N}\prod_{i=1}^Nf_{a,\delta}(v_i)
                                                    \dd\sigma_N(v).
$$
{We call the variance-one component type-I and the variance-$a$
component type-II. These types refer to the component labels before
conditioning.} The density $F_{a,\delta}$ is smooth, strictly positive,
and invariant under coordinate permutations and sign changes.
The parameter $a$ controls the width of the {type-II} component, while
$\delta$ is the {type-I} component's weight before conditioning.
For fixed $\delta>0$, the {type-I} term gives a lower bound on
$f_{a,\delta}$ over $|x|\leq\sqrt N$ that is uniform in $a$.
This bound will control the logarithm after a collision.

For an integer $r\geq1$, let $q_r$ be  the density function  of  the chi-square distribution
with $r$ degrees of freedom, that is,
$$
 q_r(t)=\frac{t^{r/2-1}e^{-t/2}}{2^{r/2}\Gamma(r/2)} \ind_{(0,\infty)}(t),\qquad t\in\mathbb{R}.
$$

\begin{proposition}\label{prop-gaussian-asymptotics}
For fixed $N\geq2$ and $0<\delta<1$, {let $K_{\delta}$ be a random variable with distribution}
\begin{equation}\label{eq-type-I-count}
 p_{N,\delta}(k):= \mathbb{P}(K_{\delta}=k)
 =\frac{\binom Nk\delta^k(1-\delta)^{N-k}q_k(N)}{B_{0,\delta}},
 \qquad k=1,\ldots,N,
\end{equation}
where
$$
B_{0,\delta}=\sum_{k=1}^N\binom Nk\delta^k(1-\delta)^{N-k}q_k(N).
$$
 Then
\begin{equation}\label{eq-gaussian-asymptotics}
 \lim_{a\downarrow0}\frac{H_N(F_{a,\delta})}{\ell_a}
     =\frac{N- \mathbb{E} (K_{\delta})}{2},\qquad
 \lim_{a\downarrow0}\frac{\mathcal{D}_N(F_{a,\delta},\log F_{a,\delta})}{\ell_a}
     =\frac{ \mathbb{E}(K_{\delta}(N-K_{\delta}))}{N-1}.
\end{equation}

\end{proposition}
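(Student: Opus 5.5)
The plan is to expand the product mixture into its $2^N$ component labellings and treat $F_{a,\delta}$ as a finite mixture of conditioned Gaussians. For $S\subset\{1,\dots,N\}$, the type-I set, let $|S|=k$ and write
$$\prod_i f_{a,\delta}(v_i)=\sum_S \delta^{k}(1-\delta)^{N-k}\prod_{i\in S}M_1(v_i)\prod_{i\notin S}M_a(v_i).$$

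\textbf{Step 1: the normalizations.} I will use the coarea identity: if $V\in\mathbb R^N$ has density $g$, then the density of $|V|^2$ at $N$ equals $c_N\int_{\mathbb S_N}g\,d\sigma_N$, with $c_N=|S^{N-1}|N^{(N-2)/2}/2$. For the $S$-term, $|V|^2=\chi^2_k+a\chi^2_{N-k}$, whose density at $N$ tends to $q_k(N)$ as $a\downarrow0$ when $k\ge1$. For $k=0$ it is exponentially small in $1/a$.

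Hence $c_NZ_{a,\delta}\to B_{0,\delta}$, and the conditional posterior weight of $\{S\}$ tends to $p_{N,\delta}(k)/\binom Nk$. We may therefore realize $V\sim F_{a,\delta}\sigma_N$ by first drawing $S$ with these weights and then drawing $V$ from the normalized $S$-component $\Phi_S$. Under $\Phi_S$, the vector $(v_i/\sqrt a)_{i\notin S}$ is tight. The coordinates $(v_i)_{i\in S}$ converge in law to a law with density on $\{\sum_{i\in S}v_i^2=N\}$, so in particular no $v_i$ with $i\in S$ concentrates at $0$.

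\textbf{Step 2: a priori bounds.} Since $f_{a,\delta}(x)\ge\delta M_1(x)\ge c>0$ for $|x|\le\sqrt N$, and $f_{a,\delta}\le Ca^{-1/2}$, we get
$$-C\le\log F_{a,\delta}\le \tfrac N2\ell_a+C$$
uniformly in $a$. Consequently $\log F_{a,\delta}(V)/\ell_a$ and $[\log F_{a,\delta}(V)-\log F_{a,\delta}(R_{ij,\theta}V)]/\ell_a$ are uniformly bounded. Both limits then follow by bounded convergence from convergence in probability.

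\textbf{Step 3: the key lemma.} I will show that for every $\varepsilon>0$ there is $M$ such that, with probability at least $1-\varepsilon$ for all small $a$, every coordinate satisfies either $|v_i|\le M\sqrt a$ or $|v_i|\ge 1/M$. On that event, let $m(v)$ be the number of small coordinates. The term with type-II set equal to the small set contributes $\asymp a^{-m/2}$. Terms placing a type-II label on a large coordinate carry a factor $e^{-1/(2M^2a)}$. Terms with fewer type-II labels are $O(a^{-(m-1)/2})$. Hence
$$\log F_{a,\delta}(v)=\tfrac{m(v)}2\ell_a+O_M(1).$$

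\textbf{Step 4: entropy.} Under $\Phi_S$ we have $m(V)=N-k$ with high probability. Therefore $H_N/\ell_a\to\sum_k p_{N,\delta}(k)(N-k)/2=(N-\mathbb E K_\delta)/2$.

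\textbf{Step 5: entropy production.} I use the one-sided representation, which gives
$$\mathcal D_N/\ell_a=\frac{2}{N-1}\sum_{i<j}\mathbb E\int\frac{\log F(V)-\log F(R_{ij,\theta}V)}{\ell_a}\frac{d\theta}{2\pi},$$
with $(V,\theta)$ drawn jointly. I then apply Step 3 to both $V$ and $R_{ij,\theta}V$.

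If $i,j\notin S$, both rotated coordinates remain $\le 2M\sqrt a$, so $m$ is unchanged. If $i,j\in S$, the rotated pair $(v_i',v_j')$ has norm at least $1/M$. Each coordinate is small only when $\theta$ lies in an arc of length $O(\sqrt a+\eta)$ around a direction determined by $(v_i,v_j)$, so $m$ is unchanged with probability tending to $1$. If exactly one of $i,j$ lies in $S$, then $v_i'\approx\pm v_j\cos\theta$ or $\pm v_j\sin\theta$, up to $O(\sqrt a)$, with $|v_j|\ge 1/M$. Both rotated coordinates are then $\ge 1/M'$ outside a set of $\theta$ of small measure, so $m$ drops by exactly one and the log-difference is $\ell_a/2+O(1)$.

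There are $k(N-k)$ mixed pairs. This gives the limit
$$\frac{2}{N-1}\cdot\frac{\mathbb E[K_\delta(N-K_\delta)]}{2}.$$

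The main obstacle is Step 3 applied to the rotated point. One has to quantify uniformly that the $\theta$-measure of intermediate-scale outcomes vanishes, using the smooth limiting law of the type-I coordinates from Step 1. One must also check that the $O_M(1)$ error in $\log F$ remains controlled after rotation, using the dichotomy small/large for $R_{ij,\theta}V$ on a set of $(V,\theta)$-probability at least $1-2\varepsilon$.
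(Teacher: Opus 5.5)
Your proposal is correct and follows essentially the paper's route. Both arguments use the same ingredients: the exact mixture over type-I sets with weights tending to $p_{N,\delta}(k)/\binom Nk$; component limit laws in which type-II coordinates live on the $\sqrt a$-scale and type-I coordinates have no atom at $0$; a uniform bound on $\log F_{a,\delta}/\ell_a$; and a pair-by-pair count in which each of the $k(N-k)$ mixed pairs loses $\ell_a/2$. Your count $m(v)$ is just the aggregated form of the paper's per-coordinate facts that $\log f_{a,\delta}(x)/\ell_a$ tends to $1/2$ for $|x|=O(\sqrt a)$ and to $0$ for $|x|\geq\eta$; these follow directly from $\log F_{a,\delta}=\sum_i\log f_{a,\delta}(v_i)-\log Z_{a,\delta}$, without expanding the mixture.

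One simplification you miss: for a same-type pair, $p_{S,a}$ depends on $(v_i,v_j)$ only through $v_i^2+v_j^2$, so $\nu_{S,a}$ is exactly $R_{ij,\theta}$-invariant and those terms vanish for every $a$. This removes your arc-measure argument for $i,j\in S$, leaving the mixed pairs as the only place where the rotated point must be analyzed.
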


\begin{remark}\label{rem-constructions}
The inverse-power construction  and the
Gaussian mixture construction  have a
common geometric idea. In each construction, successive limits of
the density parameters concentrate almost all the energy in one
coordinate, while the other $N-1$ coordinates tend to zero.
A collision involving the large coordinate has two nonzero limiting
outputs for almost every angle. Such collisions give the leading
logarithmic loss. Concentration alone does not determine the entropy
production ratio; the size of this logarithmic loss must also be
computed. For each family, we compute the leading terms of entropy
and entropy production with $N$ fixed. The resulting ratio approaches
the known lower bound \eqref{eq-Villani-LB}.
\end{remark}

The rest of the paper contains two independent proofs of
Theorem~\ref{thm-sharp-intro}.  Section~\ref{sec-inverse}  uses the normalized
products of $(a+v_i^2)^{-q}$, where $a>0$ and $q>1/2$.  We first prove Proposition  \ref{prop-inverse-asymptotics},   then give  the first proof  of Theorem~\ref{thm-sharp-intro}  by letting  $q\downarrow1/2$.
Section~\ref{sec-gaussian} uses the conditioned Gaussian mixture,
as in \cite{CarlenCarvalhoRouxLossVillani2010,Einav2011}.    We first show Proposition  \ref{prop-gaussian-asymptotics}{.}
 {{\hypersetup{linkcolor=.,citecolor=.,urlcolor=.}Then, by letting $\delta\downarrow0$ and calculating the conditional
component weights, we obtain the second proof of
Theorem~\ref{thm-sharp-intro}.}}
Figures~\ref{fig-concentration-collision} and \ref{fig-gaussian-limits}
illustrate the collision geometry and the two Gaussian limits.

In this paper,  we use  $|\cdot|$  for the Euclidean norm,  $\Gamma(\cdot)$ for the gamma
function and $\omega_{d-1}=2\pi^{d/2}/\Gamma(d/2)$ for the surface
area of $S^{d-1}(1)$, where $d\geq1$; in particular, $\omega_0=2$.
Set
$$
 c_N=\frac1{\omega_{N-1}N^{(N-2)/2}}.
$$
We use $ \mathbb{E}$ for expectation, $ \mathbb{P}$ for probability, and $\ind_B$ for the
indicator of a set $B$.

\section{Sharpness by inverse-power densities}\label{sec-inverse}

In this section,  we first  show  Proposition  \ref{prop-inverse-asymptotics},   then derive  the first proof  of Theorem~\ref{thm-sharp-intro}  by letting  $q\downarrow1/2$.

 \medskip

Each $F_{a,q}$ is a smooth strictly positive
probability density with the required symmetries. In the local parametrization below, a small coordinate
$v_i=\sqrt a y$ contributes $a^{-q}(1+y^2)^{-q}$ to the density.
Its one-dimensional change of variable contributes $a^{1/2}$ to
the volume element, giving the factor $a^{1/2-q}$.
Since $q>1/2$, the function $(1+y^2)^{-q}$ is integrable on $ \mathbb{R}$.
The next lemma proves
this localization and computes the normalizing constant.

\medskip

Unless another limit is stated, $a\downarrow0$ with $N$ and $q$
fixed throughout {this} section. We write $u_a\sim v_a$ when
$u_a/v_a\to1$, and $O(1)$ for a quantity bounded in this limit.

\begin{lemma}\label{lem-inverse-normalization}
Let
$$
 I_q=\int_ \mathbb{R}(1+y^2)^{-q}\dd y
     =\frac{\sqrt\pi \Gamma(q-1/2)}{\Gamma(q)}.
$$
{{\hypersetup{linkcolor=.,citecolor=.,urlcolor=.}For $Z_{a,q}$ defined in \eqref{eq-inverse-trials}, as $a\downarrow0$,}}
\begin{equation}\label{eq-inverse-normalization}
 Z_{a,q}\sim {2c_NN^{1/2-q}}I_q^{N-1}
                   a^{-(N-1)(q-1/2)}.
\end{equation}
{Fix an integer $1\leq k\leq N$ and $s\in\{-1,1\}$.
Under $F_{a,q}\sigma_N$, conditional on $v_k$ having the largest
absolute value and sign $s$, the density} of $(v_j/\sqrt a)_{j\ne k}$, with coordinates
listed in increasing index order, converges in $L^1( \mathbb{R}^{N-1})$ to
$$
 I_q^{-(N-1)}\prod_{j=1}^{N-1}(1+y_j^2)^{-q},\qquad y\in \mathbb{R}^{N-1}.
$$
\end{lemma}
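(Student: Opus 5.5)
The plan is to parametrize $\mathbb S_N$ near the $2N$ "poles" $\pm\sqrt N e_k$ by the remaining coordinates, and show that the whole mass of $G_{a,q}\sigma_N$ concentrates there at scale $\sqrt a$. On the cap where $v_k$ has the largest absolute value and sign $s$, write $v_k=s\sqrt{N-|w|^2}$ with $w=(v_j)_{j\ne k}$, $|w|^2\le N(N-1)/N$. The surface measure then becomes
$$
\sigma_N(dv)=c_N\sqrt N\,(N-|w|^2)^{-1/2}\,dw .
$$
This uses the standard graph formula $d\sigma=\sqrt N/|v_k|\,dw$, normalized by the total area $\omega_{N-1}N^{(N-1)/2}$. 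Substituting $w=\sqrt a\,y$ gives $dw=a^{(N-1)/2}dy$, and
$$
\prod_{j\ne k}(a+a y_j^2)^{-q}=a^{-(N-1)q}\prod_j(1+y_j^2)^{-q},
$$
while $(a+v_k^2)^{-q}\to N^{-q}$ and $(N-a|y|^2)^{-1/2}\to N^{-1/2}$ pointwise. The cap contribution to $Z_{a,q}$ is therefore
$$
a^{-(N-1)(q-1/2)}\,c_N\sqrt N\int_{\{\text{cap}\}} (a+v_k^2)^{-q}(N-a|y|^2)^{-1/2}\prod_j(1+y_j^2)^{-q}\,dy .
$$

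The key step is dominated convergence for this integral. On the cap, $v_k^2\ge1$ because $v_k$ is the largest of $N$ coordinates whose squares sum to $N$. Hence $(a+v_k^2)^{-q}\le1$. The factor $(N-a|y|^2)^{-1/2}$ is also bounded, since $a|y|^2=|w|^2\le N-1$ on the cap. The dominating function is thus $C\prod_j(1+y_j^2)^{-q}$, which is integrable because $q>1/2$. The cap region in $y$ increases to all of $\mathbb R^{N-1}$ as $a\downarrow0$, so the integral converges to $N^{-q}N^{-1/2}I_q^{N-1}$. Multiplying by $c_N\sqrt N$ gives cap mass $\sim c_N N^{-q}I_q^{N-1}a^{-(N-1)(q-1/2)}$.

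Summing over the $2N$ caps needs some care. The caps overlap only on null sets (ties), and they cover the sphere. The sum gives $2N\,c_N N^{-q}I_q^{N-1}a^{-(N-1)(q-1/2)}=2c_N N^{1-q}I_q^{N-1}a^{-(N-1)(q-1/2)}$. This differs from the stated $2c_NN^{1/2-q}$ by a factor $\sqrt N$, so the Jacobian constant has to be rechecked carefully against the convention $c_N=1/(\omega_{N-1}N^{(N-2)/2})$. Possibly the intended parametrization absorbs one $\sqrt N$ differently. I would fix the constant by testing $q=0$ formally, or by computing the total area of a single hemisphere cap, and then adjust. I expect this bookkeeping of the Jacobian normalization to be the only delicate point. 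The analysis itself is routine: the cap restriction gives $v_k^2\ge1$ and a bounded Jacobian, so there is no singularity at the equator.

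For the conditional density, the same computation shows that each cap carries asymptotically mass $1/(2N)$ by symmetry. Conditionally on the cap, the density of $y=w/\sqrt a$ is the normalized integrand
$$
\frac{h_a(y)}{\int h_a}, \qquad h_a(y)=\ind_{\text{cap}}(a+v_k^2)^{-q}(N-a|y|^2)^{-1/2}\prod_j(1+y_j^2)^{-q}.
$$
Here $h_a\to N^{-q-1/2}\prod_j(1+y_j^2)^{-q}$ pointwise, with an integrable dominating function. Dominated convergence then gives $L^1$ convergence of $h_a$, and the normalizing integrals converge. By Scheffé's lemma, or directly, the normalized densities converge in $L^1(\mathbb R^{N-1})$ to $I_q^{-(N-1)}\prod_j(1+y_j^2)^{-q}$.
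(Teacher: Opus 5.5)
Your route is the paper's own: the patches $U_{k,s}$, the graph chart over the remaining coordinates, the scaling $w=\sqrt a\,y$, the majorant $\prod_j(1+y_j^2)^{-q}$ coming from $v_k^2\ge1$, and then normalization. However, there is a concrete error that leaves the asymptotic formula for $Z_{a,q}$ unproved. Your surface-measure formula $\sigma_N(dv)=c_N\sqrt N\,(N-|w|^2)^{-1/2}dw$ counts the factor $\sqrt N$ twice. The unnormalized graph element is $\sqrt N\,|v_k|^{-1}dw$, and dividing by $\omega_{N-1}N^{(N-1)/2}$ gives
\[
 \frac{\sqrt N}{\omega_{N-1}N^{(N-1)/2}}=\frac{1}{\omega_{N-1}N^{(N-2)/2}}=c_N,
 \qquad\text{so}\qquad d\sigma_N=c_N\,(N-|w|^2)^{-1/2}\,dw .
\]
With the correct factor, each cap contributes $c_NN^{-q-1/2}I_q^{N-1}a^{-(N-1)(q-1/2)}(1+o(1))$. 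The $2N$ caps then give $2N\cdot c_NN^{-q-1/2}=2c_NN^{1/2-q}$, which is exactly the stated constant.

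As written, you derive $2c_NN^{1-q}$, which contradicts the claim, and you leave the discrepancy unresolved, even suggesting the statement may use another convention. So the first assertion is not established. Your proposed check at $q=0$ would not settle it, since the asymptotics need $q>1/2$ and $I_q=\infty$ for $q\le1/2$. The hemisphere check does settle it: polar coordinates give $\int_{|w|<\sqrt N}c_N(N-|w|^2)^{-1/2}dw=1/2$, confirming the corrected chart. For $N=2$, a direct computation on the circle also gives $Z_{a,q}\sim\pi^{-1}2^{1/2-q}I_q\,a^{1/2-q}$, in agreement with the statement.

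Everything else is correct and matches the paper. On the cap, $(a+v_k^2)^{-q}\le1$ and $(N-a|y|^2)^{-1/2}\le1$. Dominated convergence then gives $h_a\to N^{-q-1/2}\prod_j(1+y_j^2)^{-q}$ in $L^1$, with $\int h_a$ converging to a positive limit. Hence the normalized conditional densities $h_a/\int h_a$ converge in $L^1$ to the stated product density. This part is unaffected by the constant error, because all multiplicative constants cancel after normalization.

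Two small points remain. By symmetry, each cap has probability exactly $1/(2N)$ for every $a$, not merely asymptotically. You also never verify the closed form of $I_q$; it follows immediately from the substitution $t=y^2$, which gives $B(1/2,q-1/2)=\sqrt\pi\,\Gamma(q-1/2)/\Gamma(q)$.
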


\begin{proof}
The substitution $t=y^2$ reduces $I_q$ to a beta integral and gives
the stated value. Its finiteness follows from $q>1/2$. We divide
the sphere, up to a set of surface measure zero, into the patches
$$
 {U}_{k,s}=\{v\in\mathbb S_N:sv_k>0,\quad
                v_k^2>v_j^2\text{ for all }j\ne k\},
 \qquad 1\leq k\leq N,\quad s\in\{-1,1\}.
$$
Ties lie in the hyperplane sections $v_i=\pm v_j$, which have zero
surface measure, including when $N=2$. Symmetry gives probability
$1/(2N)$ to each patch under $F_{a,q}\sigma_N$.
On ${U}_{1,1}$ write
$v=(\sqrt{N-|u|^2},u)$, with $u\in \mathbb{R}^{N-1}$.
The graph area element is $\sqrt N du/\sqrt{N-|u|^2}$.
Dividing by the sphere's surface area
$\omega_{N-1}N^{(N-1)/2}$ gives
$$
 d\sigma_N(v)=c_N\frac{du}{\sqrt{N-|u|^2}}.
$$

Set $u=\sqrt a y$. The scaled patch and its parametrization are
$$
 \begin{aligned}
{D}_a&=\left\{y\in \mathbb{R}^{N-1}:a|y|^2<N,\quad
                  N-a|y|^2>a\max_{1\leq j\leq N-1}y_j^2\right\},\\
 T_a(y)&=\left(\sqrt{N-a|y|^2},\sqrt a y_1,\ldots,\sqrt a y_{N-1}\right).
 \end{aligned}
$$
On ${U}_{1,1}$, we have $v_1^2\geq1$, because the largest
of $N$ nonnegative numbers with sum $N$ is at least one.
{{\hypersetup{linkcolor=.,citecolor=.,urlcolor=.}Using $G_{a,q}$ from \eqref{eq-inverse-trials}, substituting
$u=\sqrt a y$ into the density and the surface element gives}}
\begin{equation}\label{eq-inverse-patch}
 \int_{{U}_{1,1}}G_{a,q}\dd\sigma_N
 =c_Na^{-(N-1)(q-1/2)}\int_{ \mathbb{R}^{N-1}}r_{a,q}(y)\dd y,
\end{equation}
where, on $ {D}_a$,
$$
 r_{a,q}(y)=\frac{(a+N-a|y|^2)^{-q}}{\sqrt{N-a|y|^2}}
             w_q(y),\qquad
 w_q(y)=\prod_{j=1}^{N-1}(1+y_j^2)^{-q},
$$
and $r_{a,q}=0$ outside $ {D}_a$.
The bound $N-a|y|^2\geq1$ on $ {D}_a$ controls the first
factor in $r_{a,q}$. Each fixed $y$ belongs to $ {D}_a$ for all
sufficiently small $a$, so
$$
 0\leq r_{a,q}\leq w_q,\qquad
 r_{a,q}(y)\longrightarrow r_{0,q}(y)=N^{-q-1/2}w_q(y),\qquad
 \int w_q\dd y=I_q^{N-1}<\infty.
$$
Dominated convergence gives $r_{a,q}\to r_{0,q}$ in $L^1$ and
$R_{a,q}:=\int r_{a,q}\dd y\to R_{0,q}=N^{-q-1/2}I_q^{N-1}>0$.
Summing \eqref{eq-inverse-patch} over the $2N$ patches proves
\eqref{eq-inverse-normalization}.
On the chosen patch, the conditional scaled density is
$\rho_{a,q}=r_{a,q}/R_{a,q}$. The $L^1$ and pointwise convergence of $r_{a,q}$,
together with $R_{a,q}\to R_{0,q}>0$, imply that $\rho_{a,q}\to r_{0,q}/R_{0,q}$
in $L^1$ and pointwise. Also,
$\rho_{a,q}\leq2w_q/R_{0,q}$ for all sufficiently small $a$.
Permutation and sign symmetry give the result on every patch.
\end{proof}

Figure~\ref{fig-concentration-collision} illustrates the localization in
Lemma~\ref{lem-inverse-normalization} and the collision geometry used in
Proposition~\ref{prop-inverse-asymptotics}. Panel~(a) shows the limiting
concentration points for $N=2$; for general $N$, the limiting measure
concentrates on $\{\pm\sqrt N e_k:1\le k\le N\}$, where $e_k$ is the
$k$-th coordinate vector. In panel~(b), $r>0$ is the length of the
limiting pair vector; for the inverse-power limit one has $r=\sqrt N$.
A collision rotates the input pair $(r,0)$ to
$(r\cos\theta,r\sin\theta)$ on the same circle, preserving the pair
energy $r^2$.

\begin{figure}[htbp]
\centering
\small
\begin{minipage}[t]{0.48\textwidth}
\centering
\begin{tikzpicture}
  \node at (0,2.55) {(a) Axis concentration, $N=2$};
  \draw[->] (-1.85,0) -- (1.90,0) node[right] {$v_1$};
  \draw[->] (0,-1.65) -- (0,1.80) node[above] {$v_2$};
  \draw (0,0) circle (1.3cm);
  \draw[blue,line width=2.5pt] (-12:1.3) arc (-12:12:1.3);
  \draw[blue,line width=2.5pt] (78:1.3) arc (78:102:1.3);
  \draw[blue,line width=2.5pt] (168:1.3) arc (168:192:1.3);
  \draw[blue,line width=2.5pt] (258:1.3) arc (258:282:1.3);
  \fill[blue] (0:1.3) circle (1.5pt);
  \fill[blue] (90:1.3) circle (1.5pt);
  \fill[blue] (180:1.3) circle (1.5pt);
  \fill[blue] (270:1.3) circle (1.5pt);
  \node[below right] at (1.3,0) {$\sqrt2$};
  \node[below left] at (-1.3,0) {$-\sqrt2$};
  \node[above left] at (0,1.3) {$\sqrt2$};
  \node[below left] at (0,-1.3) {$-\sqrt2$};
  \node at (0,-2.05) {$v_1^2+v_2^2=2$};
\end{tikzpicture}
\end{minipage}\hfill
\begin{minipage}[t]{0.48\textwidth}
\centering
\begin{tikzpicture}
  \node at (0,2.55) {(b) A pair collision};
  \draw[->] (-1.85,0) -- (1.90,0) node[right] {$v_i$};
  \draw[->] (0,-1.65) -- (0,1.80) node[above] {$v_j$};
  \draw (0,0) circle (1.3cm);
  \coordinate (out) at (50:1.3);
  \draw[red,thick] (0,0) -- (out);
  \draw[gray,dashed] (out) -- (out |- 0,0);
  \draw[gray,dashed] (out) -- (out -| 0,0);
  \draw[->,red,thick] (0:0.6) arc (0:50:0.6);
  \node[red] at (25:0.86) {$\theta$};
  \fill[blue] (0:1.3) circle (1.8pt);
  \fill[red] (50:1.3) circle (1.8pt);
  \node[blue,below right] at (1.3,0) {$(r,0)$};
  \node[red,above right] at (50:1.3)
    {$\begin{gathered}(r\cos\theta,\\[-2pt]r\sin\theta)\end{gathered}$};
  \node at (0,-2.05) {Pair energy $r^2$ is conserved};
\end{tikzpicture}
\end{minipage}

\caption{\scriptsize Concentration and collision geometry. (a) For $N=2$, the four
marked points are the limiting concentration points of $F_{a,q}\sigma_N$
at fixed $q>1/2$. The blue arcs indicate neighborhoods. For every
$0<a<1$, the density is strictly positive everywhere on the circle.
(b) The limiting input
pair $(r,0)$ is rotated to $(r\cos\theta,r\sin\theta)$ on the same
circle. Both outputs are nonzero when $\sin\theta\cos\theta\ne0$.
The circle in (b) represents a pair collision, not a projection of
the full energy sphere.}
\label{fig-concentration-collision}
\end{figure}

\medskip

\begin{proof}[Proof of Proposition \ref{prop-inverse-asymptotics}]
Lemma~\ref{lem-inverse-normalization} gives
$$
 \log Z_{a,q}=(N-1)(q-1/2)\ell_a+O(1).
$$
{Since $\log G_{a,q}(v)=-q\sum_{i=1}^N\log(a+v_i^2)$,}
the inequalities $a\leq a+v_i^2\leq N+1$ {on $\mathbb S_N$} give
$$
 -Nq\log(N+1)\leq\log G_{a,q}(v)\leq Nq\ell_a.
$$
{Since $F_{a,q}=G_{a,q}/Z_{a,q}$, the bounds above yield}
\begin{equation}\label{eq-inverse-log-bound}
 \sup_{v\in\mathbb S_N}\frac{|\log F_{a,q}(v)|}{\ell_a}\leq C_{N,q}
\end{equation}
for all sufficiently small $a$, where $C_{N,q}$ is finite and
independent of $a$.

In the coordinates $T_a(y)$ used for ${U}_{1,1}$,
$$
 \log G_{a,q}(T_a(y))
 =q(N-1)\ell_a-q\log(a+N-a|y|^2)
                 -q\sum_{j=1}^{N-1}\log(1+y_j^2).
$$
Subtracting $\log Z_{a,q}$ shows that
$\log F_{a,q}(T_a(y))/\ell_a\to(N-1)/2$ for every fixed $y$.
We extend this integrand by zero outside $ {D}_a$.
The bound $\rho_{a,q}\leq2w_q/R_{0,q}$ and
\eqref{eq-inverse-log-bound} provide an integrable bound for its
product with $\rho_{a,q}$. Both factors converge pointwise. Dominated
convergence therefore gives the limit $(N-1)/2$ for the conditional
expectation of $\log F_{a,q}/\ell_a$ on this patch. The same calculation
applies to every patch, and averaging proves the entropy limit.

For entropy production, the normalizing constant and all factors
outside the rotated pair cancel from the logarithmic difference
in {{\hypersetup{linkcolor=.,citecolor=.,urlcolor=.}the one-sided form of \eqref{eq-kac-entropy-production}}}.
With $x=v_i$, $z=v_j$ and
$(x',z')=(x\cos\theta-z\sin\theta,x\sin\theta+z\cos\theta)$,
the logarithmic difference is
\begin{equation}\label{eq-inverse-pair-loss}
 q\bigl(\log(a+(x')^2)+\log(a+(z')^2)
                     -\log(a+x^2)-\log(a+z^2)\bigr).
\end{equation}
Fix a patch ${U}_{k,s}$ and a scaled vector $y\in \mathbb{R}^{N-1}$.
For all sufficiently small $a$, this vector belongs to the scaled
patch. Then $v_k\to s\sqrt N$, while every other coordinate is
$\sqrt a$ times a fixed real number. If $k\notin\{i,j\}$, this
also holds for both rotated coordinates. The $\log a$ terms in
\eqref{eq-inverse-pair-loss} cancel, leaving a finite expression
independent of $a$; division by $\ell_a$ gives zero in the limit.
If $k\in\{i,j\}$, one input coordinate has a nonzero limit and
the other is $\sqrt a$ times a fixed number. The two input logarithms,
divided by $\ell_a$, therefore have sum tending to $-1$. For every angle
with $\sin\theta\cos\theta\ne0$, both output coordinates have
nonzero limits. Their logarithms divided by $\ell_a$ tend to zero,
so the normalized logarithmic loss tends to $q$.

Extend the normalized loss by zero outside $ {D}_a$, as in
the entropy calculation. By \eqref{eq-inverse-log-bound}, its absolute
value is at most $2C_{N,q}$. Its product with $\rho_{a,q}$ is bounded by
$4C_{N,q}w_q/R_{0,q}$, which is integrable with respect to
$dy d\theta/(2\pi)$. Dominated convergence therefore applies to
the coordinate and angle integrals together; the excluded angles
have zero measure. Exactly $N-1$ pairs contain $k$. The coefficient
$2/(N-1)$ in {{\hypersetup{linkcolor=.,citecolor=.,urlcolor=.}\eqref{eq-kac-generator}}} gives the conditional
limit $2q$ on every patch. Averaging proves the second limit in
\eqref{eq-inverse-asymptotics}.
\end{proof}

\begin{proof}[First proof of Theorem~\ref{thm-sharp-intro}]
The first limit in \eqref{eq-inverse-asymptotics} is positive, so
$H_N(F_{a,q})>0$ for all sufficiently small $a$. For every such $a$, both
entropy and entropy production are finite because $F_{a,q}$ is smooth and
strictly positive on the compact sphere. Proposition~\ref{prop-inverse-asymptotics} yields
$$
 \lim_{a\downarrow0}\frac{\mathcal{D}_N(F_{a,q},\log F_{a,q})}{H_N(F_{a,q})}
 =\frac{4q}{N-1}.
$$
For every sufficiently small $a$, the defining infimum is at most
$\mathcal{D}_N(F_{a,q},\log F_{a,q})/H_N(F_{a,q})$. Passing to the limit gives
$\Gamma_N^{\mathrm{sym}}\leq4q/(N-1)$ for every $q>1/2$.
Letting $q\downarrow1/2$ and using
\eqref{eq-Villani-LB} gives
$$
 \frac2{N-1}\leq\Gamma_N\leq\Gamma_N^{\mathrm{sym}}
 \leq\frac2{N-1}.
$$
To choose a single approximating sequence, take $q_j>1/2$ with
$q_j\downarrow1/2$, for integers $j\geq1$. For each $j$, choose
$0<a_j<1/j$ so that $H_N(F_{a_j,q_j})>0$ and the ratio differs
from $4q_j/(N-1)$ by less than $1/j$.
These densities are smooth, strictly positive, and invariant under
coordinate permutations and sign changes.
\end{proof}

\begin{remark}
For fixed $q>1/2$, Lemma~\ref{lem-inverse-normalization} also gives
weak convergence to the uniform measure on
$\{\pm\sqrt N e_k:1\leq k\leq N\}$, where $e_k$ is the $k$th coordinate
vector. On each patch ${U}_{k,s}$, tightness of the scaled
coordinates gives $v_j\to0$ in probability for $j\ne k$, and hence
$v_k\to s\sqrt N$ in probability. Each patch has probability
$1/(2N)$, while the entropy diverges by
\eqref{eq-inverse-asymptotics}.

\end{remark}

\section{Sharpness by conditioned Gaussian mixtures}\label{sec-gaussian}

 In this section, we first show Proposition~\ref{prop-gaussian-asymptotics}{.}
 {{\hypersetup{linkcolor=.,citecolor=.,urlcolor=.}Then, by letting $\delta\downarrow0$ and calculating the conditional
component weights, we obtain the second proof of
Theorem~\ref{thm-sharp-intro}.}}

{For each integer $r\geq1$, let $X_r$ have the chi-square distribution
with $r$ degrees of freedom and density $q_r$, and set $X_0=0$.
For each $0\leq k\leq N$, take $X'_k$ independent of $X_{N-k}$
and with the same law as $X_k$. Let $s_{k,a}$ be the density on
$(0,\infty)$ of $aX_{N-k}+X'_k$.} The endpoint cases are
$s_{0,a}(t)=a^{-1}q_N(t/a)$ and $s_{N,a}(t)=q_N(t)$.
For $1\leq k<N$, we use the convolution of the two densities, so
\begin{equation}\label{eq-gaussian-convolution}
 s_{k,a}(t)=\int_0^t a^{-1}q_{N-k}(u/a)q_k(t-u)\dd u,
 \qquad 1\leq k<N.
\end{equation}
For a subset $S\subseteq\{1,2,\cdots,N\}$, whose cardinality is denoted by $|S|$,
set
$$
 p_{S,a}(v)=\prod_{i\in S}M_1(v_i)\prod_{i\notin S}M_a(v_i),
 \qquad
 \nu_{S,a}(dv)=\frac{p_{S,a}(v) \sigma_N(dv)}
                         {\int p_{S,a}\dd\sigma_N}.
$$
The subset $S$ specifies the {type-I} coordinates, and $\nu_{S,a}$ is
the corresponding probability measure on the sphere. Then under the
product density $p_{S,a}$ on $ \mathbb{R}^N$,  the coordinates $v_1,\dots,v_N$ are independent centered Gaussian variables:
$$
v_i\sim N(0,1)\quad \mbox{ for } i\in S,
\qquad
v_i\sim N(0,a)\quad \mbox{ for }i\notin S.
$$
Thus,
the total energy
$
T(v):=\sum_{i=1}^N v_i^2
$
has density
$
s_{|S|,a}.
$

To compute the
normalization on the sphere,
for each $t>0$, let $\sigma^{(t)}$ be the uniform
probability measure on $S^{N-1}(\sqrt t)$.
{For a bounded measurable function $f$ on $\mathbb{R}$,
polar integration}, with
$t=r^2$ and
$
\dd r=\frac{\dd t}{2\sqrt t},
$
gives
$$
\int_{\mathbb{R}^N} f(|v|^2) p_{S,a}(v) \dd v
=\int_0^\infty {f(t)}
\frac{\omega_{N-1}}{2} t^{N/2-1}\left(\int_{S^{N-1}(\sqrt t)}p_{S,a}(v)\dd\sigma^{(t)}(v)\right)\dd t.
$$
{Comparison with the density $s_{|S|,a}$ gives, for almost every $t>0$,}
$$
s_{|S|,a}(t)
=
\frac{\omega_{N-1}}{2} t^{N/2-1}
\int_{S^{N-1}(\sqrt t)} p_{S,a}(v)\dd\sigma^{(t)}(v){.}
$$
 After rescaling to the unit sphere,
$$
\int_{S^{N-1}(\sqrt t)} p_{S,a}(v)\dd\sigma^{(t)}(v)
=
\int_{S^{N-1}(1)} p_{S,a}(\sqrt t u)\dd\sigma^{(1)}(u),
$$
and $p_{S,a}(\sqrt t u)$ is continuous in $t$ and uniformly
bounded on compact intervals of $t>0$.  Hence,  the spherical integral
 $
t\mapsto
\int_{S^{N-1}(\sqrt t)} p_{S,a}(v)\dd\sigma^{(t)}(v)
$
is continuous for $t>0$.
From the convolution formula \eqref{eq-gaussian-convolution},  for $1\leq k <N$
$$
s_{k,a}(t)
=
\int_0^1
a^{-1}q_{N-k}(tv/a)q_k(t(1-v)) t\dd v.
$$
On a compact subinterval of $t>0$, the integrand is bounded by a
constant times
$
v^{(N-k)/2-1}(1-v)^{k/2-1},
$
which is integrable on $(0,1)$. Therefore,  dominated convergence
gives that  for $1\leq |S|<N$,   the density $s_{|S|,a}(t)$ is also continuous on $t>0$.   For the cases $|S|=0,N$,  the continuity of $s_{|S|,a}(t)$  in $t$   follows from the
explicit densities.
Since two continuous functions that agree almost everywhere must
agree everywhere, we obtain
\begin{equation}\label{eq-exact-polar}
s_{|S|,a}(t)
=
\frac{\omega_{N-1}}{2} t^{N/2-1}
\int_{S^{N-1}(\sqrt t)} p_{S,a}(v)\dd\sigma^{(t)}(v) \mbox{ for every } t>0.
\end{equation}
Setting $t= N$ gives
\begin{equation}\label{eq-gaussian-polar}
\int_{\mathbb S_N} p_{S,a}\dd\sigma_N
=
2c_N s_{|S|,a}(N),
\qquad
c_N=\frac{1}{\omega_{N-1}N^{(N-2)/2}}.
\end{equation}

\medskip

The limits
in the next lemma and proposition are taken as $a\downarrow0$ with
$N$ and $\delta$ fixed.

\begin{lemma}\label{lem-gaussian-limit}
As $a\downarrow0$,  for each $1\leq k\leq N$,
$$
 s_{0,a}(N)\longrightarrow0,\qquad
 s_{k,a}(N)\longrightarrow q_k(N).
$$
If $|S|=k\geq1$, then under $\nu_{S,a}$,
\begin{equation}\label{eq-gaussian-component-limit}
 \bigl((v_i/\sqrt a)_{i\notin S},(v_i)_{i\in S}\bigr)
       \Longrightarrow(Y,\sqrt N U),
\end{equation}
where coordinates in each block are listed in increasing index order,
$\Longrightarrow$ denotes convergence in distribution,
$Y$ is a standard Gaussian vector in $ \mathbb{R}^{N-k}$,
$U$ is uniform on $S^{k-1}(1)$, and $Y$ and $U$ are independent.
For $k=1$, $U$ takes the values $-1$ and $1$ with equal probability.
For $k=N$, the Gaussian vector $Y$ is absent.
\end{lemma}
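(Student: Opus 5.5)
Two things have to be shown: the limits of $s_{k,a}(N)$, and the weak limit \eqref{eq-gaussian-component-limit}.

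\emph{The constants.} For $k=0$ we have $s_{0,a}(N)=a^{-1}q_N(N/a)=a^{-1}(N/a)^{N/2-1}e^{-N/(2a)}/(2^{N/2}\Gamma(N/2))$. The exponential factor beats every power of $a$, so $s_{0,a}(N)\to0$. For $k=N$ there is nothing to prove, since $s_{N,a}=q_N$. For $1\le k<N$ I will not work with \eqref{eq-gaussian-convolution} directly. Instead I substitute $u=aw$ to write
$$s_{k,a}(N)=\int_0^{N/a}q_{N-k}(w)\,q_k(N-aw)\,\dd w.$$
For each fixed $w$ the integrand converges to $q_{N-k}(w)q_k(N)$, and $\int q_{N-k}=1$, so the limit should be $q_k(N)$.

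The only care needed is a dominating function.
\begin{itemize}
\item If $k\ge2$, then $q_k$ is bounded, and $q_{N-k}$ itself dominates.
\item If $k=1$, then $q_1(t)$ blows up like $t^{-1/2}$ near $0$, i.e.\ when $aw$ is near $N$. I split at $w=N/(2a)$. On $w\le N/(2a)$ we have $q_1(N-aw)\le q_1(N/2)\cdot$const, and dominated convergence applies. On $w>N/(2a)$ the factor $q_{N-1}(w)\le Ce^{-N/(5a)}$. Since $\int_{N/(2a)}^{N/a}q_1(N-aw)\,\dd w=a^{-1}\int_0^{N/2}q_1\le a^{-1}$, this piece is $O(a^{-1}e^{-N/(5a)})\to0$.
\end{itemize}
This tail estimate at $k=1$ is the one genuinely delicate point.

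\emph{The weak limit.} I will use a disintegration identity generalizing \eqref{eq-exact-polar}. Let $(Y',X')$ be independent on $\mathbb{R}^{N-k}\times\mathbb{R}^k$ with $Y'\sim N(0,I)$ and $X'\sim N(0,I)$, and set $v=(\sqrt a Y',X')$ with the blocks placed according to $S$. Then $\nu_{S,a}$ is the conditional law of $v$ given $|v|^2=N$.

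Concretely, I would prove that for bounded continuous $\varphi$,
$$\mathbb{E}_{\nu_{S,a}}[\varphi]=\frac{\int_{\mathbb{R}^{N-k}}\varphi_a(y)\,\phi(y)\,\tilde q_k\bigl(N-a|y|^2\bigr)\,\dd y}{s_{k,a}(N)}.$$
Here $\phi$ is the standard Gaussian density on $\mathbb{R}^{N-k}$. The factor $\tilde q_k(N-a|y|^2)$ combines the $\chi^2_k$ density of $|X'|^2$ evaluated at $N-a|y|^2$ with the average of $\varphi\bigl(y,\sqrt{N-a|y|^2}\,U\bigr)$ over $U$ uniform on $S^{k-1}$. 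This is folded into $\varphi_a$, with the convention $\varphi_a=0$ when $a|y|^2\ge N$.

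To justify the identity, I would repeat the polar-coordinate and continuity argument already used for \eqref{eq-exact-polar}. That is, integrate $f(|v|^2)\varphi(v/|v|\cdot\sqrt N)\,p_{S,a}(v)$ both ways and compare two continuous functions of $t$ at $t=N$. The computation is then the same as for the constants: the numerator converges to $q_k(N)\,\mathbb{E}\bigl[\varphi(Y,\sqrt N U)\bigr]$, with the same domination and the same $k=1$ splitting. Dividing by $s_{k,a}(N)\to q_k(N)>0$ gives the limit.

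For $k=1$ the sphere $S^0=\{\pm1\}$ gives the symmetric sign. For $k=N$ the measure $\nu_{S,a}$ is uniform on the sphere and the claim is trivial.
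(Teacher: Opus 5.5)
Your proposal is correct. The constants are handled essentially as in the paper; the paper splits at $N/2$ for every $1\le k<N$, not only for $k=1$. You reach the weak limit by a genuinely different route, though.

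The paper computes the Jacobian of the chart $v=(\sqrt a\,y,\sqrt{N-a|y|^2}\,u)$. This gives $d\sigma_N=c_N\omega_{k-1}a^{m/2}(N-a|y|^2)^{k/2-1}\,dy\,d\tau_k(u)$, with $m=N-k$. From this it reads off that $(Y_a,U)$ has the exact product law $\rho_{S,a}(y)\,dy\otimes\tau_k$. Since $\rho_{S,a}$ and $\phi_m$ are probability densities and $\rho_{S,a}\to\phi_m$ pointwise, Scheff\'e's lemma gives $L^1$ convergence with no domination at all. Tightness plus Slutsky then handle $\sqrt{N-a|Y_a|^2}\,U$.

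You obtain the same formula, tested against bounded continuous $\varphi$, in a different way. You disintegrate the Gaussian product law along $|v|^2$, with the type-I block split into an independent $\chi^2_k$ radius and uniform direction, and pin $t=N$ by continuity. You then pass to the limit by dominated convergence, reusing the $k=1$ splitting from the constants. This avoids all differential geometry on the sphere.

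The cost is one point you only gesture at: continuity in $t$ of $\int\phi_m(y)\,q_k(t-a|y|^2)\mathbf{1}_{\{a|y|^2<t\}}\,\mathbb{E}_\Theta[\varphi(\cdots)]\,dy$. This is not literally the argument used for $s_{k,a}$. Here $t$ also enters $\varphi$, and for $k=1$ the factor $q_1$ blows up at $a|y|^2=t$.

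It does go through. Write $y=\rho\omega$ in polar coordinates and set $a\rho^2=tv$. The argument of $\varphi$ becomes $(\sqrt{Nv/a}\,\omega,\sqrt{N(1-v)}\,\Theta)$, which is independent of $t$. The integrand then has the majorant $C\,v^{m/2-1}(1-v)^{k/2-1}$, uniformly for $t$ in compact subsets of $(0,\infty)$, exactly as for $s_{k,a}$. You should state this explicitly rather than ``repeat the argument''.

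Conversely, the paper's chart buys the exact independence of $Y_a$ and $U$ for every $a$. It also lets Scheff\'e replace your second domination argument.
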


\begin{proof}
For the {all-type-II} component,
$$
 s_{0,a}(N)=a^{-1}q_N(N/a)
 =\frac{N^{N/2-1}}{2^{N/2}\Gamma(N/2)}
                     a^{-N/2}e^{-N/(2a)},
$$
which tends to zero  as $a\downarrow0$. The assertion for $k=N$ is exact, since
$s_{N,a}(N)=q_N(N)$.

For $1\leq k<N$, write $m=N-k$ and split
\eqref{eq-gaussian-convolution} at $N/2$.
Using the same $X_m$ for all $a$, the first part is
$$
  \mathbb{E}\bigl(q_k(N-aX_m)\ind_{\{aX_m\leq N/2\}}\bigr)
       \longrightarrow q_k(N),
$$
by bounded convergence, since $q_k$ is bounded on $[N/2,N]$ and
$aX_m\to0$ almost surely as $a\to 0$.
The second part is bounded by
$$
 \frac{a^{-m/2}e^{-N/(4a)}}{2^{N/2}\Gamma(m/2)\Gamma(k/2)}
       \int_0^N t^{m/2-1}(N-t)^{k/2-1}\dd t,
$$
which tends to zero as $a\to 0$. The integral is finite because both exponents
are greater than $-1$, including when $k=1$ or $N=2$.
This proves all normalization limits.

To prove \eqref{eq-gaussian-component-limit}, take $1\leq k<N$,
write $m=N-k$, and list the {type-II} coordinates first. Let $\tau_k$ be
the uniform probability measure on $S^{k-1}(1)$. Outside the set where
all {type-I} coordinates vanish, the sphere has the parametrization
$$
 v=(\sqrt a y,\sqrt{N-a|y|^2} u),\qquad
 y\in \mathbb{R}^m,\quad a|y|^2<N,\quad u\in S^{k-1}(1).
$$
The omitted set has zero surface measure. In the unscaled {type-II}
coordinates $z=\sqrt a y$, write $r=r(z)=\sqrt{N-|z|^2}$.
Consider the chart
$
\Phi(z,u)=(z,r(z)u),  z\in\mathbb{R}^m,  u\in S^{k-1}(1),
$
so that
$
\Phi(z,u)\in S^{N-1}(\sqrt N).
$
Equivalently, with $z=\sqrt a y$,
$
\Phi(\sqrt a y,u)=(\sqrt a y,\sqrt{N-a|y|^2} u).
$
We compute the pullback of the surface measure. For variations
$\delta z\in\mathbb{R}^m$ and $\delta u\in T_uS^{k-1}(1):=\{\delta u\in \mathbb R^k:\ u\cdot \delta u=0\}$ which  is the  tangent space at $u$, we have
$$
d\Phi_{(z,u)}(\delta z,0)
=
\left(\delta z,-\frac{z\cdot\delta z}{r}u\right),
$$
because $dr=-(z\cdot\delta z)/r$, and
$
d\Phi_{(z,u)}(0,\delta u)=(0,r \delta u).
$
These two tangent directions are orthogonal
since $u\cdot\delta u=0$.
Thus the volume factor is the product of the two factors.
For a variation $\delta z$,
$$
|d\Phi(\delta z,0)|^2
=
|\delta z|^2+\frac{(z\cdot\delta z)^2}{r^2}
=
\delta z^{\mathsf T}
\left(I_m+\frac{zz^{\mathsf T}}{r^2}\right)\delta z.
$$
Hence the corresponding volume factor is
$
\sqrt{\det\left(I_m+\frac{zz^{\mathsf T}}{r^2}\right)}=\sqrt{1+\frac{|z|^2}{r^2}}=\frac{\sqrt N}{r}.
$
 The map $\delta u\mapsto r \delta u$ scales the surface element on
$S^{k-1}(1)$ by $r^{k-1}$. Hence the $u$-volume factor is
$
r^{k-1}.
$
Combining the two factors, the unnormalized surface measure on
$S^{N-1}(\sqrt N)$ is
$$
dA_{\mathbb S_N}
=
\frac{\sqrt N}{r} r^{k-1} {dz} dS_{k-1}(u)
=
\sqrt N r^{k-2} dz dS_{k-1}(u),
$$
where $dS_{k-1}$ denotes the unnormalized surface-area measure on the
unit sphere $S^{k-1}(1)$.
 Thus
$$
d\sigma_N(v)
=
\frac{dA_{\mathbb S_N}}{\omega_{N-1}N^{(N-1)/2}}
=
\frac{\sqrt N}{\omega_{N-1}N^{(N-1)/2}}
r^{k-2} dz dS_{k-1}(u)=c_N r^{k-2} dz dS_{k-1}(u).
$$
 Finally, write
$
dS_{k-1}(u)=\omega_{k-1} d\tau_k(u),
$
where $d\tau_k$ is the uniform probability measure on
$S^{k-1}(1)$. Also set
$
z=\sqrt a y, {dz}=a^{m/2} dy,
$
and
$
r^{k-2}
=
(N-a|y|^2)^{(k-2)/2}
=
(N-a|y|^2)^{k/2-1}.
$
Therefore
\begin{equation}\label{eq-gaussian-block-surface}
 d\sigma_N(v)=c_N\omega_{k-1}a^{m/2}
       (N-a|y|^2)^{k/2-1} dy d\tau_k(u)
\end{equation}
{on the chart}
$
v=(\sqrt a y,\sqrt{N-a|y|^2} u),
a|y|^2<N.
$
For $k=1$, the same formula follows from the two graphs with
$u=\pm1$ and $\omega_0=2$.
This is exactly the stated surface-measure formula. The boundary
$a|y|^2=N$, where $r=0$, has zero surface measure, so it does not
affect the measure.

Multiplying \eqref{eq-gaussian-block-surface} by $p_{S,a}$ and using
\eqref{eq-gaussian-polar} gives that the exact joint law  of
$
 Y_a:=\bigl(v_i/\sqrt a\bigr)_{i\notin S}
$
and $U$ under $\nu_{S,a}$ is
$\rho_{S,a}(y) dy d\tau_k(u)$,
where
\begin{equation}\label{eq-gaussian-type-II-density}
 \rho_{S,a}(y)=\frac{\phi_m(y)q_k(N-a|y|^2)
                       \ind_{\{a|y|^2<N\}}}{s_{k,a}(N)},
 \qquad \phi_m(y)=(2\pi)^{-m/2}e^{-|y|^2/2}.
\end{equation}
Thus the scaled {type-II} vector $Y_a$ has density $\rho_{S,a}$, and
the {type-I} direction $U$ has law $\tau_k$ independently of $Y_a$.
This identifies the component law at the prescribed radius
$\sqrt N$.
By the first part of the lemma,
$
 s_{k,a}(N)\longrightarrow q_k(N)>0.
$
Hence,
$\rho_{S,a}(y)\to\phi_m(y)$ for every fixed $y$.
Both densities integrate to one, so Scheff\'e's lemma gives
$L^1$ convergence. The {type-I} vector is
$$
 \sqrt{N-a|Y_a|^2} U.
$$
The $L^1$ convergence implies tightness of $Y_a$. For every
$\varepsilon>0$ and $R>0$, if $aR^2\leq\varepsilon$, then
$$
  \mathbb{P}(a|Y_a|^2>\varepsilon)\leq \mathbb{P}(|Y_a|>R).
$$
Letting $a\downarrow0$ and then $R\to\infty$ gives
$a|Y_a|^2\to0$ in probability. Since the law of $U$ is fixed and
$U$ is independent of $Y_a$, we have
$(Y_a,U)\Longrightarrow(Y,U)$. Slutsky's lemma and the continuous
mapping theorem give the required joint limit.
For $k=N$, all coordinates are {type-I}, and
$\nu_{S,a}=\sigma_N$ for every $a$. When $k=0$, all factors again
have a common variance, so the same equality holds. In particular,
$\nu_{\varnothing,a}$ remains uniform even though its weight in
the full mixture tends to zero.
\end{proof}

Expanding the product in \eqref{eq-gaussian-trials} gives
$$
 \prod_{i=1}^Nf_{a,\delta}(v_i)
 =\sum_{S\subseteq \{1,2,\cdots, N\}}\delta^{|S|}(1-\delta)^{N-|S|}p_{S,a}(v).
$$
{Set}
$$
 B_{a,\delta}=\sum_{k=0}^N\binom Nk
                 \delta^k(1-\delta)^{N-k}s_{k,a}(N).
$$
By \eqref{eq-gaussian-polar}, $Z_{a,\delta}=2c_NB_{a,\delta}$,
and the conditioned density has the exact mixture representation
\begin{equation}\label{eq-gaussian-mixture}
 F_{a,\delta} d\sigma_N
 =\sum_{S\subseteq\{1,2,\cdots,N\}}\pi_{S,a,\delta} d\nu_{S,a},
 \qquad
 \pi_{S,a,\delta}
 =\frac{\delta^{|S|}(1-\delta)^{N-|S|}s_{|S|,a}(N)}{B_{a,\delta}}.
\end{equation}
At fixed $\delta$, Lemma~\ref{lem-gaussian-limit} shows that as $a\to 0$,
$$
 B_{a,\delta}\longrightarrow
 B_{0,\delta}=\sum_{k=1}^N\binom Nk
                     \delta^k(1-\delta)^{N-k}q_k(N)>0.
$$
In particular, $\log Z_{a,\delta}/\ell_a\to0$  as $a\to 0$.
The weights $\pi_{S,a,\delta}$ are positive and sum to one. Since
there are $\binom Nk$ subsets of cardinality $k$, the number of {type-I}
labels (the {type-I} count) in \eqref{eq-gaussian-mixture} converges in
distribution to a random variable $K_{\delta}$ with probabilities \eqref{eq-type-I-count}
and each of these probabilities is positive. The {all-type-II} weight
tends to zero. The factor $q_k(N)$ in \eqref{eq-type-I-count} is the
limiting density of total energy in the class with $k$ {type-I} labels.
It accounts for the change in weights caused by conditioning and
is positive for every $k\geq1$. The law of $K_{\delta}$ depends on $N$ and
$\delta$, which remain fixed in the proposition \ref{prop-gaussian-asymptotics}.

\medskip

Figure~\ref{fig-gaussian-limits} illustrates the two successive limits of
the conditional component weights. The first limit, $a\downarrow0$ with
$\delta$ fixed, drives the weight of the {all-type-II} class to zero, while
every class with $k\ge1$ {type-I} labels retains positive weight. The second
limit, $\delta\downarrow0$, concentrates the {type-I count} law at $k=1$, so
the total weight of the class with exactly one {type-I} label tends to one.
Because the density is invariant under coordinate permutations and sign
changes, this limit does not select a preferred coordinate or sign; all
choices remain equally likely.

\begin{figure}[htbp]
\centering
\small
\begin{tikzpicture}
  \draw[blue] (0,0) rectangle (4.1,2.55);
  \draw[blue] (5.3,0) rectangle (9.4,2.55);
  \draw[blue] (10.6,0) rectangle (14.7,2.55);

  \node at (2.05,2.10) {Conditioned mixture};
  \node at (2.05,1.57) {$0<a,\delta<1$};
  \node at (2.05,1.04) {$|S|=0,1,\ldots,N$};
  \node at (2.05,0.42) {\shortstack{{All type-I counts}\\{have positive weight}}};

  \node at (7.35,2.10) {After the first limit};
  \node at (7.35,1.57) {$k=1,\ldots,N$};
  \node at (7.35,1.04) {$p_{N,\delta}(k)>0$};
  \node at (7.35,0.42) {\shortstack{The {all-type-II} weight\\has vanished}};

  \node at (12.65,2.10) {After the second limit};
  \node at (12.65,1.57) {$p_{N,\delta}(1)\longrightarrow1$};
  \fill[red] (11.55,0.97) circle (2.4pt);
  \draw[blue,thick] (12.20,0.97) circle (2.4pt);
  \node at (13.20,0.97) {$\times(N-1)$};
  \node at (12.65,0.35) {Any index; either sign};

  \draw[->,thick] (4.12,1.27) -- (5.28,1.27);
  \node at (4.70,1.67) {$a\downarrow0$};
  \node at (4.70,0.91) {\scriptsize $\delta$ fixed};
  \draw[->,thick] (9.42,1.27) -- (10.58,1.27);
  \node at (10.00,1.67) {$\delta\downarrow0$};

  \fill[red] (5.30,-0.58) circle (2.4pt);
  \node[anchor=west,inner sep=0pt] at (5.52,-0.58) {{type-I} label};
  \draw[blue,thick] (7.85,-0.58) circle (2.4pt);
  \node[anchor=west,inner sep=0pt] at (8.07,-0.58) {{type-II} label};
\end{tikzpicture}

\caption{\scriptsize Successive parameter limits at fixed $N$. After $a\downarrow0$
with $\delta$ fixed, the {type-I} count has the law $p_{N,\delta}$ in
\eqref{eq-type-I-count}, with positive probability at each
$k\in\{1,\ldots,N\}$. The limit $\delta\downarrow0$ then concentrates
this law at $k=1$. The filled circle represents one {type-I} label; the
open circle represents $N-1$ {type-II} labels. These are limits of
component laws, not successive collisions of the walk.}
\label{fig-gaussian-limits}
\end{figure}

In a component with $1\leq k\leq N$ {type-I} labels, each {type-II}
coordinate contributes $\ell_a/2$ to the logarithm in expectation
at leading order. There are $N-k$ {type-II} coordinates and $k(N-k)$
pairs with one {type-I} and one {type-II} label. Only these pairs contribute
to the expected logarithmic loss. The {all-type-II} class is treated
separately because its weight tends to zero. The   proposition \ref{prop-gaussian-asymptotics}
computes the two coefficients after averaging over the component law.

\begin{proof}[Proof of Proposition \ref{prop-gaussian-asymptotics}]
Let $h_a(x)=\log f_{a,\delta}(x)$ and $g_a(x)=h_a(x)/\ell_a$.
Here $g_a$ is the logarithm of the full one-dimensional mixture,
normalized by $\ell_a$. We average this same function under each
component law $\nu_{S,a}$. For $|x|\leq\sqrt N$,
$$
 \frac{\delta e^{-N/2}}{\sqrt{2\pi}}
 \leq f_{a,\delta}(x)\leq\frac{a^{-1/2}}{\sqrt{2\pi}}.
$$
Hence $|g_a(x)|\leq C$ on this interval for $0<a\leq e^{-1}$,
where $C=C_{N,\delta}<\infty$ does not depend on $a$.
For every fixed $0<R<\infty$,
$$
 h_a(\sqrt a y)=\frac{\ell_a}{2}-\frac12\log(2\pi)
 +\log\bigl((1-\delta)e^{-y^2/2}
                         +\delta\sqrt a e^{-ay^2/2}\bigr)
$$
shows that $g_a(\sqrt a y)\to1/2$ uniformly on $|y|\leq R$.
Also, for each $0<\eta<\sqrt N$, the {type-II} Gaussian tends uniformly
to zero on $\eta\leq|x|\leq\sqrt N$, whereas the {type-I} term is bounded
away from zero. Hence
\begin{equation}\label{eq-gaussian-log-limits}
 \sup_{|y|\leq R}|g_a(\sqrt a y)-1/2|\longrightarrow0,
 \qquad
 \sup_{\eta\leq|x|\leq\sqrt N}|g_a(x)|\longrightarrow0.
\end{equation}

We use two expectation estimates for the {type-II} and {type-I} coordinates. First, let $W_a$ be real random variables that are
tight as $a\downarrow0$ and satisfy
$|\sqrt a W_a|\leq\sqrt N$ almost surely. Then
$$
  \mathbb{E}(|g_a(\sqrt a W_a)-1/2|)
 \leq\sup_{|y|\leq R}|g_a(\sqrt a y)-1/2|
                          +(C+1/2) \mathbb{P}(|W_a|>R).
$$
Letting $a\downarrow0$ and then $R\to\infty$ shows that the
expectation tends to zero. Second, suppose real random variables
satisfy $V_a\Longrightarrow V$, $|V_a|\leq\sqrt N$ almost surely,
and $ \mathbb{P}(V=0)=0$. Splitting at $|V_a|=\eta$ and using the second
limit in \eqref{eq-gaussian-log-limits} gives
$$
 \limsup_{a\downarrow0} \mathbb{E}(|g_a(V_a)|)
 \leq C\limsup_{a\downarrow0} \mathbb{P}(|V_a|\leq\eta)
 \leq C \mathbb{P}(|V|\leq\eta).
$$
The last inequality is the closed-set bound for convergence in
distribution. Letting $\eta\downarrow0$ makes the right side
vanish, so this expectation also tends to zero.

Now fix $S$ with $|S|=k\geq1$.
By Lemma~\ref{lem-gaussian-limit},   $
\bigl((v_i/\sqrt a)_{i\notin S},(v_i)_{i\in S}\bigr)
\Longrightarrow (Y,\sqrt N U),
$
where $U$ is uniform on $S^{k-1}(1)$.  Thus,
\begin{equation}\label{eq-gaussian-log-means}
 \int g_a(v_i)\dd\nu_{S,a}\longrightarrow
 \begin{cases}
  1/2,&i\notin S,\\
  0,&i\in S.
 \end{cases}
\end{equation}
{To justify the second case, let $r\in\{1,\ldots,k\}$ index a
coordinate of $U$. If $k=1$, then $U_r\in\{-1,1\}$ almost surely.
If $k\geq2$, let $Z=(Z_1,\ldots,Z_k)$ be a standard Gaussian vector.
Since $U\overset{d}{=}Z/|Z|$, $Z_r$ has a continuous distribution,
and $|Z|>0$ almost surely, we have
$\mathbb{P}(U_r=0)=\mathbb{P}(Z_r=0)=0$.
Thus every coordinate of $\sqrt N U$ has no atom at $0$.
In particular, the limiting law of each type-I coordinate has no atom at $0$.}

Using \eqref{eq-gaussian-mixture}, we obtain
$$
 \frac{H_N(F_{a,\delta})}{\ell_a}
 =\sum_{S\subseteq\{1,2,\cdots,N\}}\pi_{S,a,\delta}
        \int\sum_{i=1}^Ng_a(v_i)\dd\nu_{S,a}
                         -\frac{\log Z_{a,\delta}}{\ell_a}.
$$
{For $S=\varnothing$, the integral is bounded in absolute value by
$NC$, and its coefficient tends to zero.} For every other subset,
the integral tends to $(N-|S|)/2$. Since the sum is finite, we may
pass to the limit term by term. Together with \eqref{eq-type-I-count}
and $\log Z_{a,\delta}/\ell_a\to0$, this proves the entropy limit in
\eqref{eq-gaussian-asymptotics}.

To compute entropy production, define the normalized logarithmic
loss for a pair by
$$
 L_{ij,a}(v,\theta)=g_a(v_i)+g_a(v_j)
       -g_a((R_{ij,\theta}v)_i)-g_a((R_{ij,\theta}v)_j).
$$
All four arguments lie in $[-\sqrt N,\sqrt N]$, so
$|L_{ij,a}|\leq4C$. {{\hypersetup{linkcolor=.,citecolor=.,urlcolor=.}The one-sided form of
\eqref{eq-kac-entropy-production} and \eqref{eq-gaussian-mixture}}}
give the exact identity
$$
 \frac{\mathcal{D}_N(F_{a,\delta},\log F_{a,\delta})}{\ell_a}
 =\frac2{N-1}\sum_{i<j}\sum_{S\subseteq\{1,2,\cdots,N\}}\pi_{S,a,\delta}
      \int\!\int_0^{2\pi}L_{ij,a}(v,\theta)
                                  \avtheta \nu_{S,a}(dv).
$$
If $i,j\in S$ or $i,j\notin S$, their Gaussian factors in
$p_{S,a}$ have a common variance. Their product depends only on
$v_i^2+v_j^2$, so $\nu_{S,a}$ is invariant under $R_{ij,\theta}$.
The expectation of $L_{ij,a}$ is therefore zero for every $a$ and
$\theta$. This applies to every pair in the {all-type-II} and {all-type-I}
components.

If exactly one coordinate is {type-I}, first take $i\in S$ and
$j\notin S$. The other ordering gives the same calculation after
interchanging the pair and replacing the angle by its negative.
Before rotation, \eqref{eq-gaussian-log-means} shows that the
expectation of $g_a(v_i)+g_a(v_j)$ tends to $1/2$.
Let $\Theta$ be uniform on $[0,2\pi)$ and independent of $v$.
By \eqref{eq-gaussian-component-limit},
$$
 (v_i\cos\Theta-v_j\sin\Theta,\ v_i\sin\Theta+v_j\cos\Theta)
 \Longrightarrow (V\cos\Theta,V\sin\Theta),
$$
where $V$ is the limiting {type-I} coordinate and is independent of
$\Theta$.
We have $ \mathbb{P}(V=0)=0$, and  since $V$ and $\Theta$ are independent,
$
\mathbb{P}(V\cos\Theta=0)
\le\mathbb{P}(V=0)+\mathbb{P}(\cos\Theta=0)
=0,
$
and similarly
$
\mathbb{P}(V\sin\Theta=0)=0.
$
Thus neither limiting output coordinate has an atom at $0$.
The second expectation
estimate above applies to each output and shows that both normalized
logarithms have expectations tending to zero. Consequently,
$$
 \int_{\mathbb S_N}\int_0^{2\pi}
 L_{ij,a}(v,\theta)\avtheta\dd\nu_{S,a}(v)
 \longrightarrow
 \begin{cases}
  1/2,&\text{exactly one of }i,j\text{ belongs to }S,\\
  0,&\text{otherwise}.
 \end{cases}
$$
A subset of size $k$ has $k(N-k)$ mixed pairs. Passing to the limit
in the exact finite-mixture identity above now gives
$$
 \lim_{a\downarrow0}\frac{\mathcal{D}_N(F_{a,\delta},\log F_{a,\delta})}{\ell_a}
 =\frac2{N-1}\sum_{k=1}^Np_{N,\delta}(k)\frac{k(N-k)}2
 =\frac{ \mathbb{E}(K_{\delta}(N-K_{\delta}))}{N-1}.
$$
\end{proof}

\begin{proof}[Second proof of Theorem~\ref{thm-sharp-intro}]
For fixed $0<\delta<1$, the probability $p_{N,\delta}(1)$ is
positive. Since $K_{\delta}$ takes values in $\{1,\ldots,N\}$,
$$
  \mathbb{E}(N-K_{\delta})\geq(N-1)p_{N,\delta}(1)>0,\qquad
  \mathbb{E}(K_{\delta}(N-K_{\delta}))\geq(N-1)p_{N,\delta}(1)>0.
$$
Both coefficients in \eqref{eq-gaussian-asymptotics} are therefore
positive. For small $a$, the trial densities have finite positive
entropy and finite entropy production, and
\begin{equation}\label{eq-gaussian-ratio}
 \lim_{a\downarrow0}\frac{\mathcal{D}_N(F_{a,\delta},\log F_{a,\delta})}{H_N(F_{a,\delta})}
 =\frac{2 \mathbb{E}(K_{\delta}(N-K_{\delta}))}{(N-1) \mathbb{E}(N-K_{\delta})}.
\end{equation}
The right side is a weighted average of $2k/(N-1)$, with weights
proportional to $(N-k)p_{N,\delta}(k)$ for $1\leq k<N$.
Among the values being averaged, the smallest is $2/(N-1)$,
corresponding to $k=1$. This motivates the second limit, which
concentrates the limiting law of $K_{\delta}$ at $1$. For $2\leq k\leq N$,
$$
 \frac{p_{N,\delta}(k)}{p_{N,\delta}(1)}
 =\frac{\binom Nk}{N}
   \left(\frac{\delta}{1-\delta}\right)^{k-1}
   \frac{q_k(N)}{q_1(N)}\longrightarrow0
 \quad \mbox{ as }\delta\downarrow0.
$$
The remaining factors are positive constants at fixed $N$ and $k$.
Since the probabilities sum to one, these ratios imply
$p_{N,\delta}(1)\to1$. There are only finitely many possible {type-I}
counts, and hence, as $\delta\downarrow 0$,
$$
  \mathbb{E}(N-K_{\delta})\longrightarrow N-1,\qquad
  \mathbb{E}(K_{\delta}(N-K_{\delta}))\longrightarrow N-1.
$$
Consequently,
$$
 \lim_{\delta\downarrow0}\lim_{a\downarrow0}
       \frac{\mathcal{D}_N(F_{a,\delta},\log F_{a,\delta})}{H_N(F_{a,\delta})}
 =\frac2{N-1}.
$$
Every trial density is symmetric, so this successive limit gives
$\Gamma_N^{\mathrm{sym}}\leq2/(N-1)$. Combining it with
\eqref{eq-Villani-LB} and
$\Gamma_N\leq\Gamma_N^{\mathrm{sym}}$ proves the theorem without
using the inverse-power construction. For a single approximating
sequence, choose $\delta_j\in(0,1)$ with $\delta_j\downarrow0$,
where $j\geq1$ is an integer. For each $j$, take $0<a_j<1/j$ so
that the entropy is positive and the ratio differs from its
fixed-$\delta_j$ limit in \eqref{eq-gaussian-ratio} by less than
$1/j$. These densities are smooth, strictly positive, and invariant
under permutations and sign changes. When $N=2$, only $K_{\delta}=1$
contributes to either expectation in \eqref{eq-gaussian-ratio};
the first limit already equals $2$ for every fixed $\delta$.
\end{proof}

\begin{remark}\label{rem-variance-one}
The same conditioned density can be obtained from a centered Gaussian
mixture of variance one. Fix $0<a<1$ and $0<\delta<1$.
For $0\leq b<1/2$, set
$$
 C(b)=\int_ \mathbb{R} e^{bx^2}f_{a,\delta}(x)\dd x
 =\frac{1-\delta}{\sqrt{1-2ab}}+
   \frac{\delta}{\sqrt{1-2b}},
 \qquad f_b(x)=\frac{e^{bx^2}f_{a,\delta}(x)}{C(b)}.
$$
Combining the quadratic terms in the exponents gives
$$
 f_b=(1-\delta_b)M_{a/(1-2ab)}+\delta_bM_{1/(1-2b)},
 \qquad \delta_b=\frac{\delta}{C(b)\sqrt{1-2b}}.
$$
The new mixture weight satisfies $0<\delta_b<1$. The density $f_b$
is centered, and its variance is the continuous function
$m(b)=(1-\delta_b)a/(1-2ab)+\delta_b/(1-2b)$ on $[0,1/2)$.
Moreover, $m(0)=(1-\delta)a+\delta<1$, whereas
$\delta_b\to1$ and $m(b)\geq\delta_b/(1-2b)\to\infty$ as
$b\uparrow1/2$. Thus some $b\in(0,1/2)$ gives variance one.
On $\mathbb S_N$,
$$
 \prod_{i=1}^Nf_b(v_i)
 =\frac{e^{bN}}{C(b)^N}\prod_{i=1}^Nf_{a,\delta}(v_i).
$$
The extra factor is constant on the sphere and disappears after
normalization. Thus the normalized product of $f_b$ is exactly
$F_{a,\delta}$.
\end{remark}


\begin{thebibliography}{99}

\bibitem{BobkovTetali2006}
S. G. Bobkov and P. Tetali.
Modified logarithmic Sobolev inequalities in discrete settings.
\emph{J. Theoret. Probab.} \textbf{19} (2006), 289--336.
\href{https://doi.org/10.1007/s10959-006-0016-3}{doi:10.1007/s10959-006-0016-3}.

\bibitem{BristielCaputo2024}
A. Bristiel and P. Caputo.
Entropy inequalities for random walks and permutations.
\emph{Ann. Inst. Henri Poincar\'e Probab. Stat.} \textbf{60} (2024), 54--81.
\href{https://doi.org/10.1214/22-AIHP1267}{doi:10.1214/22-AIHP1267}.

\bibitem{Caputo2008}
P. Caputo, On the spectral gap of the Kac walk and other binary collision processes.
\emph{ALEA Lat. Am. J. Probab. Math. Stat.}, \textbf{4} (2008), 205--222.

\bibitem{CaputoDaiPraPosta2009}
P. Caputo, P. Dai Pra, and G. Posta.
Convex entropy decay via the Bochner--Bakry--Emery approach.
\emph{Ann. Inst. Henri Poincar\'e Probab. Stat.} \textbf{45} (2009), 734--753.
\href{https://doi.org/10.1214/08-AIHP183}{doi:10.1214/08-AIHP183}.

\bibitem{CarlenCarvalhoLoss2000}
{{\hypersetup{linkcolor=.,citecolor=.,urlcolor=.}E. A. Carlen, M. C. Carvalho, and M. Loss.
Many-body aspects of approach to equilibrium.
\emph{Journ\'ees \'Equations aux D\'eriv\'ees Partielles} (2000), article no. 11, 1--12.
\href{https://doi.org/10.5802/jedp.575}{doi:10.5802/jedp.575}.}}


\bibitem{CarlenCarvalhoEinav2018}
E. A. Carlen, M. C. Carvalho, and A. Einav.
Entropy production inequalities for the Kac walk.
\emph{Kinet. Relat. Models} \textbf{11} (2018), 219--238.
\href{https://doi.org/10.3934/krm.2018012}{doi:10.3934/krm.2018012}.

\bibitem{CarlenCarvalhoRouxLossVillani2010}
E. A. Carlen, M. C. Carvalho, J. Le Roux, M. Loss, and C. Villani.
Entropy and chaos in the Kac model.
\emph{Kinet. Relat. Models} \textbf{3} (2010), 85--122.
\href{https://doi.org/10.3934/krm.2010.3.85}{doi:10.3934/krm.2010.3.85}.

\bibitem{CarlenCarvalhoLoss2003}
E. A. Carlen, M. C. Carvalho, and M. Loss.
Determination of the spectral gap for Kac's master equation and related
stochastic evolution. \emph{Acta Math.} \textbf{191} (2003), 1--54.
\href{https://doi.org/10.1007/BF02392695}{doi:10.1007/BF02392695}.

\bibitem{DiaconisSaloffCoste1996}
P. Diaconis and L. Saloff-Coste.
Logarithmic Sobolev inequalities for finite Markov chains.
\emph{Ann. Appl. Probab.} \textbf{6} (1996), 695--750.
\href{https://doi.org/10.1214/aoap/1034968224}{doi:10.1214/aoap/1034968224}.

\bibitem{Einav2011}
A. Einav. On Villani's conjecture concerning entropy production for the Kac
master equation. \emph{Kinet. Relat. Models} \textbf{4} (2011), 479--497.
\href{https://doi.org/10.3934/krm.2011.4.479}{doi:10.3934/krm.2011.4.479}.

\bibitem{Einav2024}
A. Einav. The entropic journey of Kac's model.
In E. Carlen, P. Gon\c{c}alves, and A. J. Soares (eds.),
\emph{From Particle Systems to Partial Differential Equations},
Springer Proceedings in Mathematics \& Statistics \textbf{465},
Springer, Cham, 2024, 69--101.
\href{https://doi.org/10.1007/978-3-031-65195-3_4}{doi:10.1007/978-3-031-65195-3\_4}.

\bibitem{GaoQuastel2003}
F. Gao and J. Quastel.
Exponential decay of entropy in the random transposition and
Bernoulli--Laplace models.
\emph{Ann. Appl. Probab.} \textbf{13} (2003), 1591--1600.
\href{https://doi.org/10.1214/aoap/1069786512}{doi:10.1214/aoap/1069786512}.

\bibitem{Goel2004}
S. Goel.
Modified logarithmic Sobolev inequalities for some models of random walk.
\emph{Stochastic Process. Appl.} \textbf{114} (2004), 51--79.
\href{https://doi.org/10.1016/j.spa.2004.06.001}{doi:10.1016/j.spa.2004.06.001}.

\bibitem{Gross1975}
L. Gross.
Logarithmic Sobolev inequalities.
{\emph{Am. J. Math.} \textbf{97} (1975), 1061--1083.}
{{\hypersetup{linkcolor=.,citecolor=.,urlcolor=.}\href{https://doi.org/10.2307/2373688}{doi:10.2307/2373688}.}}

\bibitem{Janvresse2001}
E. Janvresse. Spectral gap for Kac's model of Boltzmann equation.
\emph{Ann. Probab.} \textbf{29} (2001), 288--304.
\href{https://doi.org/10.1214/aop/1008956330}{doi:10.1214/aop/1008956330}.

\bibitem{Kac1956}
M. Kac. Foundations of kinetic theory.
In \emph{Proceedings of the Third Berkeley Symposium on Mathematical
Statistics and Probability}, vol. III, University of California Press,
Berkeley and Los Angeles, 1956, 171--197.

\bibitem{LeeYau1998}
T.-Y. Lee and H.-T. Yau.
Logarithmic Sobolev inequality for some models of random walks.
\emph{Ann. Probab.} \textbf{26} (1998), 1855--1873.
\href{https://doi.org/10.1214/aop/1022855885}{doi:10.1214/aop/1022855885}.

\bibitem{Maslen2003}
{D. K. Maslen.}
The eigenvalues of Kac's master equation.
{\emph{Math. Z.} \textbf{243} (2003), 291--331.}
{{\hypersetup{linkcolor=.,citecolor=.,urlcolor=.}\href{https://doi.org/10.1007/s00209-002-0466-y}{doi:10.1007/s00209-002-0466-y}.}}

\bibitem{Salez2021}
J. Salez.
A sharp log-Sobolev inequality for the multislice.
\emph{Ann. Henri Lebesgue} \textbf{4} (2021), 1143--1161.
\href{https://doi.org/10.5802/ahl.99}{doi:10.5802/ahl.99}.

\bibitem{VillaniBook2002}
C. Villani.
 A review of mathematical topics in collisional kinetic theory.
 In \emph{Handbook of mathematical fluid dynamics, Vol. I}, 2002, pages 71--305. North-Holland, Amsterdam.


\bibitem{Villani2003}
C. Villani. Cercignani's conjecture is sometimes true and always almost true.
\emph{Comm. Math. Phys.} \textbf{234} (2003), 455--490.
\href{https://doi.org/10.1007/s00220-002-0777-1}{doi:10.1007/s00220-002-0777-1}.

\bibitem{BolleyVillani2005}
{{\hypersetup{linkcolor=.,citecolor=.,urlcolor=.}F. Bolley and C. Villani.
Weighted Csisz\'ar--Kullback--Pinsker inequalities and applications to
transportation inequalities.
\emph{Ann. Fac. Sci. Toulouse Math.} (6) \textbf{14} (2005), 331--352.
\href{https://doi.org/10.5802/afst.1095}{doi:10.5802/afst.1095}.}}
\end{thebibliography}
\end{document}